\newcolumntype{C}{>{$}c<{$}} 
\theoremstyle{plain}
\newtheorem{theorem}{Theorem}[section]
\newtheorem{lemma}[theorem]{Lemma}
\newtheorem{proposition}[theorem]{Proposition}
\newtheorem{question}[theorem]{Question}
\theoremstyle{definition}
\newtheorem{remark}[theorem]{Remark}
\numberwithin{equation}{section}
\newcommand{\QQ}{\mathbb{Q}}
\newcommand{\RR}{\mathbb{R}}
\newcommand{\ZZ}{\mathbb{Z}}
\newcommand{\im}{{\rm im}}
\DeclareMathOperator{\Hom}{Hom}
\begin{document}

\title[]{Toric surfaces with symmetries by reflections}

%
%

\author{Jongbaek Song}
\address{School of Mathematics, KIAS, 
85 Hoegiro Dongdaemun-gu, Seoul 02455, Republic of Korea}
\email{jongbaek@kias.re.kr}

%
%
%
%


\subjclass[2020]{14M25, 52B15, 57S12}
\keywords{toric variety, toric surface, reflection, singular cohomology, }

\maketitle 
\abstract 
Let $W$ be a reflection group in a plane and $P$ a rational polygon that is invariant under the $W$-action. The action of $W$ on $P$ induces a $W$-action on the toric variety $X_P$ associated with $P$. In this paper, we study the $W$-representation on the cohomology $H^\ast(X_P)$ and show that the invariant subring $H^\ast(X_P)^W$ is isomorphic to the cohomology ring of the toric variety associated with the \emph{fundamental region}~$P/W$. 
As an example, we provide an explicit description of the main result for the case of the toric variety associated with the fan of Weyl chambers of type $G_2$. 
\endabstract

\section{Introduction}
A toric variety is a complex algebraic variety on which an algebraic torus acts with an open dense orbit. Due to the nice torus symmetry of a toric variety, 
one can construct a toric variety~$X$ from a certain combinatorial object called a \emph{fan} $\Sigma$, and  the combinatorics of $\Sigma$ can be recovered using the orbit structure of the torus action on $X$. For the case of a projective toric variety, we often begin with a lattice $M$ and a rational polytope $P\subset M\otimes_\ZZ \RR$ to define a toric variety $X_P$ by considering the normal fan $\Sigma_P$ of $P$ (see \cite[Section  2.3]{CLS} for instance). In general, two different polytopes may define the same normal fan. In this case, they define the same toric variety with different projective embeddings. 

Recall that the normal fan of $P$ is determined by the face structure of $P$ together with primitive vectors~$\lambda_i\in M^\ast$ defining the hyperplane supporting 
each codimension-1 face  $F_i$ of $P$. Therefore, most of the topological information of $X_P$ is encoded in the face structure of $P$ and~$\lambda_i$'s. For instance, when $P$ is a simple polytope, the associated toric variety $X_P$ is an orbifold. In this case, the cohomology ring $H^\ast(X_P;\QQ)$ is given by 
\begin{equation}\label{eq_cohom_XP}
H^\ast(X_P;\QQ)=\QQ[x_1, \dots, x_m]/\mathcal{I}+\mathcal{J},
\end{equation}
where $\mathcal{I}$ and $\mathcal{J}$ are ideals 
\begin{enumerate}
\item $\mathcal{I}=\left<\prod_{k=1}^r x_{i_k} \mid \bigcap_{k=1}^r  F_{i_k} =\emptyset \right>$; 
\item $\mathcal{J}=\left< \sum_{k=1}^m \left< m, \lambda_i\right>x_i \mid m\in M\right> $. 
\end{enumerate}
This result is due to Jurkiewicz \cite{Jur3} for projective nonsingular toric varieties and to Danilov \cite{Dan} for arbitrary toric orbifolds.

When a toric variety $X_P$ arises from a polytope $P\subset \RR^n$ that is preserved by an action of a finite group $W\varleq GL_n(\RR)$, the action of $W$ on $P$ determines a $W$-action on $X_P$. Hence, the cohomology $H^\ast(X_P)$ is equipped with the induced $W$-module structure. A typical example is given by the toric variety associated with the permutohedron~$P_n$, namely the convex hull of the orbit of $(1, 2, \dots, n+1)\in \RR^{n+1}$ by the action of the symmetric group $\mathfrak{S}_{n+1}$ defined by permuting coordinates.  Instead of $\mathfrak{S}_{n+1}$ and $(1, \dots, n+1)\in \RR^{n+1}$, one can consider the Weyl group for each of other Lie types and a point having distinct coordinates in the corresponding root lattice respectively, which yields a lattice polytope known as a \emph{weight polytope} in the literature.  Accordingly, we have the associated  toric variety. 
We refer the readers to \cite{Proc}, where the author considered the fan consisting of Weyl chambers, that is the normal fan of the permutohedron. We also refer to \cite{Abe, BaBl, Huh, Kly, MPS, DoLu, Stem} for relevant studies of these varieties. 

In this paper, we consider arbitrary 2-dimensional polytopes $P$ that are preserved by actions of groups $W$ generated by reflections. Note that the reflection group of a plane is the dihedral group 
\begin{equation}\label{eq_dihedral_gp}
D_{2\ell}=\left< s_1, s_2 \mid s_1^2=s_2^2=1,~(s_1s_2)^\ell=(s_2s_1)^\ell \right>
\end{equation}
generated by two reflections $s_1$ and $s_2$ corresponding to two lines with angle $\frac{2\pi}{\ell}$ for some $\ell\geq 3$. Therefore, we consider the cases where the group $W$ is either the group generated by a single reflection or the dihedral group $D_{2\ell}$. In either cases, one can take a region $R\subset P$ such that each point $\mathbf{x}\in R$ represents the $W$-orbit of~$\mathbf{x}$. We denote by $P/W$ the closure $\overline{R}\subset P$ and call it the \emph{fundamental region}. Notice that~$P/W$ is again a convex rational polytope, hence we may associate $P/W$ with a toric variety $X_{P/W}$. Then, we study the $W$-action on the cohomology $H^\ast(X_P)$ and show the following theorem. 
\begin{theorem}\label{thm_main}
Let $P$ be a 2-dimensional rational polytope with an action of a reflection group $W$. Let $X_P$ and $X_{P/W}$ be toric varieties associated with $P$ and the fundamental region $P/W$, respectively. Then, there is a ring isomorphism 
\begin{equation*}
H^\ast(X_{P/W}) \cong H^\ast(X_P)^W.
\end{equation*}
\end{theorem}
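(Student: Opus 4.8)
The plan is to compute both sides using the Jurkiewicz--Danilov presentation~\eqref{eq_cohom_XP} and match generators and relations equivariantly. First I would analyze the combinatorics of the fundamental region: given the $W$-invariant polygon $P$ with edges $F_1, \dots, F_m$ and primitive normals $\lambda_1, \dots, \lambda_m$, the polygon $P/W$ is cut out by the walls of a fundamental chamber of $W$ together with the $W$-orbit representatives of the $F_i$. Concretely, if $W$ is generated by a single reflection, $P/W$ is a half of $P$ and its edges are: the representatives $F_{i_1}, \dots, F_{i_k}$ of the $W$-orbits of edges not fixed setwise by the reflection, possibly one or two edges of $P$ fixed setwise by $W$, and the new edge lying along the mirror line. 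For the dihedral case one intersects with two walls. The key point I would establish is that the edges of $P/W$ come in two types — those inherited from $P$ (one per $W$-orbit of edges of $P$) and those lying on mirrors of $W$ — and that the fan data (face incidences, primitive normals) of $P/W$ is determined by this.

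Next I would identify the $W$-module structure of $H^\ast(X_P;\QQ)$ via~\eqref{eq_cohom_XP}. Since $W$ permutes the edges $F_i$ of $P$ (and the primitive normals $\lambda_i$ transform accordingly under the dual action on $M^\ast$), $W$ acts on $\QQ[x_1, \dots, x_m]$ by permuting the $x_i$'s along its action on edges (with signs/twists only when an edge is fixed setwise but its normal is sent to a scalar multiple — here to itself, as $P$ is invariant, so it is an honest permutation). The ideals $\mathcal{I}$ and $\mathcal{J}$ are $W$-stable, so the presentation is $W$-equivariant. Taking $W$-invariants, $H^\ast(X_P;\QQ)^W$ is computed from the invariant subalgebra $\QQ[x_1, \dots, x_m]^W$; because $W$ acts by permutations on the variables, this invariant subalgebra is generated by orbit sums $y_\alpha = \sum_{i \in \alpha} x_i$ over $W$-orbits $\alpha$ of edges, for the free-orbit edges, together with the $x_i$'s fixed by $W$. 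I would then show that the induced relations on these orbit-sum generators coincide, after the obvious dictionary $y_\alpha \leftrightarrow$ (variable of the corresponding edge of $P/W$), with the Stanley--Reisner ideal $\mathcal{I}_{P/W}$ and the linear ideal $\mathcal{J}_{P/W}$ of $X_{P/W}$. The edges of $P/W$ on the mirrors contribute the ``extra'' generators: these should match the classes $x_i$ fixed by $W$, or more precisely appear through the linear relations $\mathcal{J}$ restricted to $W$-invariant $m \in M^W$.

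The main obstacle will be the mirror edges and the behaviour at the corners of $P/W$ sitting on the reflection lines. An edge of $P$ that the reflection sends to itself either lies along the mirror (then it is literally an edge of $P/W$ with the same normal) or is perpendicular to it (then it is bisected, and $P/W$ inherits half of it — same normal, new vertex on the mirror); and genuinely new edges of $P/W$ appear along the mirror only when $P$ crosses the mirror transversally. Tracking the primitive normal vectors through this ``folding'' — in particular checking they remain \emph{primitive} in $M$ and that the new vertex-incidences in $P/W$ produce exactly the squarefree monomials generating $\mathcal{I}_{P/W}$ — is the delicate combinatorial bookkeeping. I would handle it by a case analysis on $W$ (single reflection; dihedral $D_{2\ell}$) and, within each, on how the mirrors meet $P$ (through a vertex versus through the interior of an edge). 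Once the dictionary between the edges/normals of $P/W$ and the $W$-orbit data of $P$ is nailed down, the ring isomorphism follows formally by comparing the two presentations~\eqref{eq_cohom_XP}; the subtlety is purely in setting up that dictionary correctly, especially over $\ZZ$ if one wants the statement integrally rather than rationally. Finally, I would note that since $P/W$ is again a rational polygon hence its associated toric surface is an orbifold, the presentation~\eqref{eq_cohom_XP} is available for $X_{P/W}$ as well, so both sides are genuinely described by the same type of formula and the comparison is legitimate.
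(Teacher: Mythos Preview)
Your high-level plan and case division match the paper's, but two steps do not go through as written. First, the claim that ``$\QQ[x_1,\dots,x_m]^W$ is generated by orbit sums $y_\alpha$'' is false for the free polynomial ring: if $W=\ZZ/2$ swaps $x_1\leftrightarrow x_2$, then $x_1x_2$ is invariant but not a polynomial in $x_1+x_2$. The paper avoids this by passing first to the \emph{equivariant} cohomology $H^\ast_T(X_P)=\QQ[x_E]/\mathcal{I}$, where the Stanley--Reisner relations kill the offending products so that orbit sums \emph{do} generate $H^\ast_T(X_P)^W$; surjectivity of $H^\ast_T(X_P)^W\to H^\ast(X_P)^W$ then follows because taking $W$-invariants is exact over $\QQ$.

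Second, your handling of the mirror-edge generators is where the real content lies, and both of your suggestions fail: in Cases~(1-3) and~(2-3) there is \emph{no} edge of $P$ fixed by $W$, and for the dihedral action one has $M^W=0$, so ``linear relations restricted to $m\in M^W$'' yields nothing. The paper's device is to write $\lambda(u(E_j))-\lambda(E_j)=c_{u,j}\eta_1+d_{u,j}\eta_2$ in terms of the mirror normals $\eta_i$ and then \emph{define} $\psi(x_{s_1})=\sum_{u,j} c_{u,j}\,x_{u(E_j)}$ (and similarly with the $d_{u,j}$ for $x_{s_2}$); this formula is rigged precisely so that $\psi$ carries the linear ideal $\mathcal{J}_{P/W}$ into $\mathcal{J}$, and $W$-invariance of $\psi(x_{s_i})$ is then read off from the linear relation for $m=\eta_i^\ast$ (which is \emph{not} $W$-invariant). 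Finally, rather than verifying that the induced relations match --- the ``delicate combinatorial bookkeeping'' you anticipate --- the paper observes that $H^\ast(X_{P/W})$ is a Poincar\'e duality algebra and that $\psi$ is an isomorphism on the one-dimensional top degree, so Proposition~\ref{prop_PD} forces $\psi$ to be injective. Your route could likely be made to work, but only after supplying exactly these two missing ingredients.
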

Using the following classical result  
\[
H^\ast(X/W) \cong H^\ast(X)^W
\]
for any locally compact Hausdorff $W$-space for some finite group $W$ (see for instance \cite[III-2]{Bor}), the isomorphism of Theorem \ref{thm_main} also implies 
\begin{equation}\label{eq_claim}
H^\ast(X_{P/W}) \cong H^\ast(X_P/W).
\end{equation}

One of the motivations of Theorem \ref{thm_main} is the following question posed in \cite{HMSS}. 
\begin{question}\cite[Question 8.1]{HMSS} \label{question}
If $W$ is generated by reflections in $GL_n(\RR)$ and acts on $P$, must $X_P/W$ be isomorphic with $X_{P/W}$?
\end{question}
Several classes of toric varieties supporting Question~\ref{question} are provided in  \cite[Section 8]{HMSS}. The result of Theorem \ref{thm_main} gives a cohomological evidence of this question for toric surfaces.

This paper is organized as follows. 
We begin in Section \ref{sec_toric_surface} with a brief background about toric surfaces, where we mostly focus on their rational cohomology rings. For simplicity, we write $H^\ast(-)\colonequals H^\ast(-;\QQ)$ otherwise stated. 

In Sections \ref{sec_sing_ref} and \ref{sec_two_ref},  we consider the case where $P$ is symmetric by a single reflection and by a dihedral group respectively, and give a proof of Theorem \ref{thm_main} for each case. Although the main ideas are similar, we separate them to give a more detailed explanation for each case.  

Section \ref{sec_G_2} is devoted to the toric variety associated with the fan of Weyl chambers of type~$G_2$ as a particular case of Theorem \ref{thm_main}. Instead of working with a fan, we consider a polytope whose normal fan is the fan of Weyl chambers of type~ $G_2$, see \eqref{eq_fund_reg_G_2}. Indeed, the authors of \cite{HMSS} considered toric varieties associated with fans of Weyl chambers of all classical Lie types and proved the isomorphism of Theorem~\ref{thm_main} for any parabolic subgroup of the Weyl group, see \cite[Theorem 1.1]{HMSS}. Section~\ref{sec_G_2} extends their result for the root system of type $G_2$. 

\section{Toric surface}\label{sec_toric_surface}
Let $T(\cong (S^1)^n)$ be an $n$-dimensional torus. We denote by $M$ and $N$  the character lattice of $T$ and the lattice of 1-parameter subgroups, respectively. Given a character $\chi^m\in \Hom(S^1, T)$ for $m \in M$ and a $1$-parameter subgroup $\lambda^n\in \Hom (T, S^1)$ for $n\in N$, we have $\chi^m \circ \lambda^n \colon S^1 \to S^1$ given by $t \mapsto t^r$ for some $r\in \ZZ$. Hence, there is a natural bilinear pairing 
\[
\left< ~, ~\right> \colon M \times N \to \ZZ
\]
defined by $\left<m, n\right>=r$. 

Let $P$ be an $n$-dimensional rational polytope in $M\otimes_\ZZ \RR$ with $\ell$-facets, say $F_1, \dots, F_\ell$.  We denote by $\lambda_i\in N$ the primitive (outward) vector defining a facet $F_i$ for $1 \leq i \leq \ell$, namely, 
\begin{equation}\label{eq_P_as_intersection_of_half_sp}
P= \bigcap_{i=1}^\ell \left\{\mathbf{x}\in M\otimes_\ZZ \RR \mid \left< \mathbf{x}, \lambda_i\right> + a_i \leq 0 \text{ for } \lambda_i\in N,~ a_i<0\right\}
\end{equation}
and $F_i=\{\mathbf{x}\in P \mid  \left< \mathbf{x}, \lambda_i\right> + a_i = 0\}$. We assume that no redundant inequality exists in \eqref{eq_P_as_intersection_of_half_sp} without loss of generality. 
Then, one can associate a toric variety $X_P$ of complex dimension $n$. We refer to \cite{CLS} for explicit definitions of $X_P$. Also, we refer to \cite[Chapter 7]{BP-book} for more topological viewpoint.  When $X_P$ is of complex dimension $2$, namely when it is associated to a rational polygon, we call $X_P$ a toric surface. 

In this manuscript, we focus on the rational cohomology ring $H^\ast(X_P)$ of a toric surface $X_P$. So, the ideal $\mathcal{I}$ of \eqref{eq_cohom_XP} can be written by 
\begin{equation}\label{eq_ideal_polygon}
\mathcal{I}=\left< x_ix_j \mid E_i\cap E_j =\emptyset \right>,
\end{equation}
where $E_i$ denotes a facet, namely an edge of $P$. 
%
%

We notice that $H^\ast(X_P)$ is a Poincar\'e duality algebra as $X_P$ is an orbifold, see for instance \cite[Theorem 11.4.8]{CLS}. The following proposition about Poincar\'e duality algebras will be used in the proof of Theorem \ref{thm_main}, which will be discussed in the following two sections. 

\begin{proposition}\label{prop_PD}
Let $\phi =\bigoplus_{i=1}^n \phi_n \colon \mathcal{A}=\bigoplus_{i=0}^nA_i \to \mathcal{B}=\bigoplus_{i=0}^nB_i$ be a morphism of graded algebras. If $\mathcal{A}$ is a Poincar\'e duality algebra and $\phi_{n} \colon A_n \to B_n$ is an isomorphism, then $\phi$ is an isomorphism. 
\end{proposition}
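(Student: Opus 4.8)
The plan is to exploit injectivity of $\phi$ first, then use Poincaré duality on $\mathcal{A}$ together with the fact that $\phi$ respects the top degree to upgrade injectivity to surjectivity degree by degree. First I would show that $\phi$ is injective. Suppose $a \in A_i$ is a nonzero homogeneous element with $\phi(a) = 0$. Since $\mathcal{A}$ is a Poincaré duality algebra of formal dimension $n$, the pairing $A_i \times A_{n-i} \to A_n$ is nondegenerate, so there exists $b \in A_{n-i}$ with $ab \neq 0$ in $A_n$. Then, because $\phi$ is a ring homomorphism, $\phi_n(ab) = \phi_i(a)\phi_{n-i}(b) = 0$, contradicting the hypothesis that $\phi_n$ is an isomorphism (in particular injective). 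Hence $\phi$ is injective.

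Next I would prove surjectivity by a dimension count. Injectivity of $\phi$ gives $\dim_\QQ A_i \leq \dim_\QQ B_i$ for every $i$. For the reverse inequality I would like to use Poincaré duality again, but note that $\mathcal{B}$ is not assumed to be a Poincaré duality algebra; the argument must go through $\mathcal{A}$. The key observation is that since $\phi_n$ is an isomorphism, the composite pairing on $\mathcal{B}$ restricted to the image of $\phi$ detects everything in degree $n$: for $b \in B_i$, consider the linear map $B_{n-i} \to B_n \cong A_n$ given by $c \mapsto bc$. If instead one pairs within the image, for $a \in A_i$ one has $a \mapsto (b' \mapsto \phi_n^{-1}(\phi(a) b'))$, and by Poincaré duality of $\mathcal{A}$ this identifies $A_i$ with $\Hom_\QQ(A_{n-i}, \QQ)$ via $\phi$. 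Carefully, the cleanest route is: the bilinear pairing $B_i \times B_{n-i} \to B_n \xrightarrow{\ \phi_n^{-1}\ } A_n \cong \QQ$ restricts on $\phi(A_i) \times \phi(A_{n-i})$ to the nondegenerate Poincaré pairing of $\mathcal{A}$; hence this pairing on $B_i \times B_{n-i}$ is nondegenerate on the subspaces $\phi(A_i)$ and $\phi(A_{n-i})$, which forces $\dim_\QQ B_i \geq \dim_\QQ \phi(A_i) = \dim_\QQ A_i$ and, by the same token applied with the roles swapped, one gets the matching bound. Combining $\dim_\QQ A_i \leq \dim_\QQ B_i$ from injectivity with the constraint that the Poincaré pairing of $\mathcal{A}$ embeds as a nondegenerate pairing inside the degree-$i$/degree-$(n-i)$ pairing of $\mathcal{B}$, a short linear-algebra argument yields $\dim_\QQ A_i = \dim_\QQ B_i$ for all $i$; together with injectivity this gives that each $\phi_i$ is an isomorphism, hence $\phi$ is.

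I expect the main obstacle to be the surjectivity step, precisely because $\mathcal{B}$ carries no a priori duality structure: one cannot simply dualize in $\mathcal{B}$. The trick is to transport the nondegeneracy of the pairing from $\mathcal{A}$ through $\phi$ and the isomorphism $\phi_n$, and to be careful that this only shows the pairing on $\mathcal{B}$ is nondegenerate when restricted to $\phi(\mathcal{A})$, which is enough for the dimension inequality $\dim B_i \le \dim B_i$ — no, rather one must argue that if $\dim_\QQ B_i > \dim_\QQ A_i$ for some $i$, then there is a nonzero $b \in B_i$ orthogonal to all of $\phi(A_{n-i})$; but since $\phi(A_{n-i})$ might be a proper subspace of $B_{n-i}$ this does not immediately give a contradiction, so one also needs the complementary inequality in degree $n-i$ and a counting argument summing over all degrees. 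Once that bookkeeping is set up correctly, the conclusion is immediate.
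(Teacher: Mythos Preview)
Your injectivity argument is correct and is the actual content needed. However, your surjectivity argument cannot be completed: the proposition as literally stated is \emph{false} without further hypotheses on $\mathcal{B}$. A counterexample: take $\mathcal{A} = \QQ[x]/(x^3)$ with $\deg x = 1$ (a Poincar\'e duality algebra of formal dimension $2$), $\mathcal{B} = \QQ[x,y]/(x^3, xy, y^2)$ with $\deg x = \deg y = 1$, and $\phi$ the map sending $x\mapsto x$. Then $B_2 = \QQ x^2$ and $\phi_2$ is an isomorphism, yet $\phi_1 \colon \QQ x \hookrightarrow \QQ x \oplus \QQ y$ is not surjective. Your ``short linear-algebra argument'' and ``counting argument summing over all degrees'' therefore cannot exist: a nondegenerate pairing on the subspaces $\phi(A_i) \times \phi(A_{n-i})$ imposes no upper bound whatsoever on $\dim_\QQ B_i$, and you yourself flag exactly this difficulty without resolving it.

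The paper states the proposition without proof and, in every application, has already established surjectivity separately (the lemmas showing $\im \phi = H^\ast(X_P)^W$); the proposition is then applied to $\phi$ viewed as a map onto its image. In other words it is invoked only to supply injectivity --- precisely the part you proved cleanly. So your first paragraph is all that is required; the surjectivity paragraph should be dropped, and the proposition should be read as ``$\phi$ is injective'', equivalently ``$\phi$ is an isomorphism onto its image''.
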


\section{Symmetry by a reflection}\label{sec_sing_ref}
In this section, we  consider a polygon $P\subset M\otimes_\ZZ \RR$ having a symmetry by a reflection $\sigma$, namely $\sigma \colon M\otimes_\ZZ \RR \to M\otimes_\ZZ \RR$ sending $P$ onto itself. Then, $\sigma$-action on $P$ induces the $\sigma$-action on the corresponding toric surface $X_P$. Therefore, we have the $\sigma$-action on the cohomology $H^\ast(X_P)$ defined by $s(x_i)=x_j$ if $\sigma(F_i)=F_j$. 

The fixed point set of $P$ by $\sigma$ is 
\begin{equation}\label{eq_new_edge}
E_\sigma\colonequals \{ \mathbf{x}\in P \mid \sigma(x)=x\}=\{ \mathbf{x}\in P \mid \left<\mathbf{x}, \eta\right>=0\},
\end{equation}
for some primitive vector $\eta\in N$. We take the half space 
\begin{equation*}
P/\sigma\colonequals \{ \mathbf{x}\in P \mid \left<\mathbf{x}, \eta\right>\leq 0\}
\end{equation*} 
and call it the \emph{fundamental region} of $P$ with respect to $\sigma$. We note that $P/\sigma$ itself is a polygon with edges inherited from $P$ together with the one extra edge $E_\sigma$. 

\begin{figure}
\begin{tikzpicture}
\node[opacity=0, regular polygon, regular polygon sides=12, draw, minimum size = 2.5cm](m) at (0,0) {};
\foreach \x in {1,...,12}{
\coordinate (\x) at (m.corner \x); 
}

\draw[fill=yellow, yellow] (m.side 7)--(8)--(9)--(m.side 9) [out=65, in=-65] to (m.side 11)--(12)--(1)--(m.side 1);

\node[right] at (0,0) {\scriptsize$E_\sigma$};

\foreach \a in {1,2,3,12,6,7,8,9} {
\draw[fill] (\a) circle (1.5pt); 
}

\draw (m.side 11)--(12)--(1)--(2)--(3)--(m.side 3);
\draw (m.side 5)--(6)--(7)--(8)--(9)--(m.side 9);

\node[rotate=90] at (m.side 4) {$\cdots$};
\node[rotate=90] at (m.side 10) {$\cdots$};

\node at ($1.2*(m.side 12)$) {\scriptsize$E_1$};
\node[rotate=-60] at ($1.2*(m.side 11)$) {\scriptsize$\cdots$};

\node at ($1.2*(m.side 8)$) {\scriptsize$E_n$};
\node[rotate=60] at ($1.2*(m.side 9)$) {\scriptsize$\cdots$};

\node at ($1.2*(m.side 2)$) {\scriptsize$E_{n+1}$};
\node[rotate=60] at ($1.2*(m.side 3)$) {\scriptsize$\cdots$};

\node at ($1.2*(m.side 6)$) {\scriptsize$E_{2n}$};
\node[rotate=120] at ($1.2*(m.side 5)$) {\scriptsize$\cdots$};

\node at ($1.2*(m.side 7)$) {\scriptsize$E_{2n+1}$};
\node at ($1.2*(m.side 1)$) {\scriptsize$E_{2n+2}$};

\draw[blue] ($0.8*(1)+0.8*(2)$)--($0.7*(7)+0.7*(8)$);
\node[above] at ($0.8*(1)+0.8*(2)$) {$\sigma$};
\node at ($0.75*(1)+0.75*(2)$) {$\curvearrowleftright$};

\node at (0,-2.2) {Case (1-1)};

\begin{scope}[xshift=120]
\node[opacity=0, regular polygon, regular polygon sides=9, draw, minimum size = 2.5cm](m) at (0,0) {};
\foreach \x in {1,...,9}{
\coordinate (\x) at (m.corner \x); 
}

\draw[fill=yellow, yellow] (m.side 5)--(6)--(7)--(m.side 7) [out=90, in=-70] to (m.side 8)--(9)--(1);
\node[right] at (0,0) {\scriptsize$E_\sigma$};

\foreach \x in {1,2,4,5,6,7,9}{
\draw[fill] (\x) circle (1.5pt); 
}

\draw (m.side 8)--(9)--(1)--(2)--(m.side 2);
\draw (m.side 3)--(4)--(5)--(6)--(7)--(m.side 7);

\node[rotate=80] at ($0.9*(3)$) {$\cdots$};
\node[rotate=100] at ($0.9*(8)$) {$\cdots$};

\draw[blue] ($1.5*(1)$)--($0.7*(5)+0.7*(6)$);
\node[above] at ($1.5*(1)$) {$\sigma$};
\node at ($1.4*(1)$) {$\curvearrowleftright$};

\node at ($1.2*(m.side 9)$) {\scriptsize$E_{1}$};
\node[rotate=110] at ($1.2*(m.side 8)$) {\scriptsize$\cdots$};

\node at ($1.2*(m.side 6)$) {\scriptsize$E_{n}$};
\node[rotate=80] at ($1.2*(m.side 7)$) {\scriptsize$\cdots$};

\node at ($1.2*(m.side 1)$) {\scriptsize$E_{n+1}$};
\node[rotate=70] at ($1.2*(m.side 2)$) {\scriptsize$\cdots$};

\node at ($1.2*(m.side 4)$) {\scriptsize$E_{2n}$};
\node[rotate=100] at ($1.2*(m.side 3)$) {\scriptsize$\cdots$};

\node at ($1.2*(m.side 5)$) {\scriptsize$E_{2n+1}$};

\node at (0,-2.2) {Case (1-2)};

\end{scope}

\begin{scope}[xshift=240]
\node[rotate=22.5, opacity=0, regular polygon, regular polygon sides=8, draw, minimum size = 2.5cm](m) at (0,0) {};
\foreach \x in {1,...,8}{
\coordinate (\x) at (m.corner \x); 
}

\draw[fill=yellow, yellow] (5)--(6)--(m.side 6) [out=80, in=-80] to (m.side 7)--(8)--(1);
\node[right] at (0,0) {\scriptsize$E_\sigma$};

\foreach \x in {8,1,2,4,5,6}
{\draw[fill] (\x) circle (1.5pt); }

\draw (m.side 7)--(8)--(1)--(2)--(m.side 2);
\draw (m.side 3)--(4)--(5)--(6)--(m.side 6);

\node[rotate=90] at ($0.9*(3)$) {$\cdots$};
\node[rotate=90] at ($0.9*(7)$) {$\cdots$};

\node at ($1.2*(m.side 1)$) {\scriptsize$E_{n+1}$};
\node[rotate=67.5] at ($1.2*(m.side 2)$) {\scriptsize$\cdots$};

\node at ($1.2*(m.side 4)$) {\scriptsize$E_{2n}$};
\node[rotate=112.5] at ($1.2*(m.side 3)$) {\scriptsize$\cdots$};

\node at ($1.2*(m.side 5)$) {\scriptsize$E_{n}$};
\node[rotate=67.5] at ($1.2*(m.side 6)$) {\scriptsize$\cdots$};

\node at ($1.2*(m.side 8)$) {\scriptsize$E_{1}$};
\node[rotate=-67.5] at ($1.2*(m.side 7)$) {\scriptsize$\cdots$};

\draw[blue] ($1.5*(1)$)--($1.3*(5)$);
\node[above] at ($1.5*(1)$) {$\sigma$};
\node at ($1.4*(1)$) {$\curvearrowleftright$};
\node at (0,-2.2) {Case (1-3)};

\end{scope}
\end{tikzpicture}
\caption{Polygons with symmetries by a reflection and their fundamental regions.}
\label{fig_symmetry_by_a_reflection}
\end{figure}
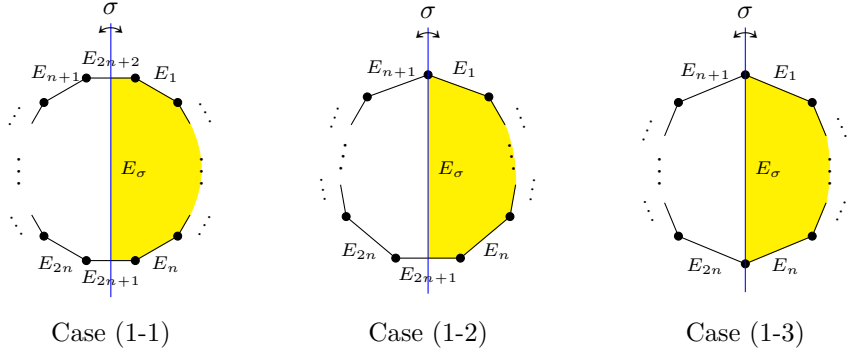

For the convenience of the later discussion, we consider three different cases of the symmetry of $P$ and the resulting fundamental regions $P/\sigma$ described in Figure~ \ref{fig_symmetry_by_a_reflection}. In the following subsections, we prove  Theorem \ref{thm_main} for each Case (1-1)--(1-3) together with the group $W$ generated by~$\sigma$, where we use the labelings of edges  given in Figure~ \ref{fig_symmetry_by_a_reflection}. 

\subsection{Proof of Theorem \ref{thm_main} for Case (1-1)}\label{subsec_Case_1_1}
Recall from \eqref{eq_P_as_intersection_of_half_sp} and \eqref{eq_new_edge} that $\lambda_i\in N$ and $\eta\in N$ are vectors defining the edge $E_i$ and $E_\sigma$, respectively. Since $P$ is symmetric by the reflection $\sigma$,  $\lambda_i$'s and $\eta$ satisfy the following three conditions:
\begin{enumerate}[label=(\roman*)]
\item  \label{eq_ci}  $\lambda_{n+i}-\lambda_i = c_i \eta$ for some $c_i\in \QQ$ and for each $i =1, \dots,  n$; 
\item \label{eq_2n+1_perp_eta} $\lambda_{2n+1} \perp \eta$;
\item \label{eq_2n+2_parallel_2n+1} $\lambda_{2n+1}=-\lambda_{2n+2}$.
\end{enumerate}

Applying  \eqref{eq_cohom_XP}, we have 
\[
H^\ast(X_{P/\sigma})=\QQ[x_1, \dots, x_{n}, x_{2n+1}, x_{2n+2}, x_\sigma]/\mathcal{I}_{\text{(1-1)}}+\mathcal{J}_{\text{(1-1)}}
\]
where $\mathcal{I}_{\text{(1-1)}}$ is the ideal generated by
\[
\left\{ x_jx_k\mid  E_j \cap E_k = \emptyset  \text{ for } j,k \in \{1, \dots, n, 2n+1, 2n+2\} \right\}\cup \{x_jx_\sigma\mid 1\leq j\leq n\}
\]
and $\mathcal{J}_{\text{(1-1)}}$ is the ideal generated by 
\[
\left\{ \sum_{i=1}^n \left< m, \lambda_i\right> x_i + \left< m, \lambda_{2n+1}\right>x_{2n+1}+ \left< m, \lambda_{2n+2}\right>x_{2n+2} + \left<m, \eta\right> x_\sigma ~\Big|~  m\in M\right\}.
\]

Now, we consider the map 
\begin{equation}\label{eq_map_phi}
\phi \colon H^\ast(X_{P/\sigma}) \to H^\ast(X_P)
\end{equation}
defined by 
\begin{align*}
x_i & \mapsto \begin{cases} x_i + x_{n+i} &  i=1, \dots, n; \\
x_i & i=2n+1,~ 2n+2,\end{cases} \\
x_\sigma &\mapsto \sum_{i=1}^nc_ix_{n+i},
\end{align*}
where $c_i\in \ZZ$ is defined in \ref{eq_ci}. 

\begin{lemma}\label{lem_image_invariant}
The image of $\phi$ is the invariant ring $H^\ast(X_P)^\sigma$. 
\end{lemma}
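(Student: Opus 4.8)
The plan is to show separately that the image of $\phi$ lands inside the invariant ring $H^\ast(X_P)^\sigma$, and that every invariant class is hit.

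\textbf{Step 1: the image is contained in $H^\ast(X_P)^\sigma$.} The $\sigma$-action on $H^\ast(X_P)$ permutes the generators by $x_i \leftrightarrow x_{n+i}$ for $1\le i\le n$, fixes $x_{2n+1}$ and $x_{2n+2}$ (since by \ref{eq_2n+2_parallel_2n+1} and \ref{eq_2n+1_perp_eta} the two long edges $E_{2n+1}, E_{2n+2}$ lie on lines perpendicular to $\eta$, so $\sigma$ preserves each). Thus $\phi(x_i) = x_i + x_{n+i}$ is manifestly $\sigma$-invariant for $1\le i\le n$, and $\phi(x_{2n+1}), \phi(x_{2n+2})$ are invariant. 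The only nontrivial point is $\phi(x_\sigma) = \sum_{i=1}^n c_i x_{n+i}$: applying $\sigma$ sends this to $\sum_{i=1}^n c_i x_i$, so I must check $\sum_i c_i x_{n+i} = \sum_i c_i x_i$ in $H^\ast(X_P)$. This is where relation \ref{eq_ci} enters: summing the linear relations in $\mathcal{J}$ for $X_P$, the element $\sum_{i=1}^{2n+2}\langle m,\lambda_i\rangle x_i = 0$, and pairing $\lambda_{n+i} - \lambda_i = c_i\eta$ appropriately should express the difference $\sum_i c_i(x_{n+i}-x_i)$ as a combination of relations in $\mathcal{J}$, hence zero. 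Concretely, the identity $\sum_i c_i x_{n+i} - \sum_i c_i x_i \equiv 0$ should follow by choosing $m$ so that $\langle m, \lambda_i\rangle$ isolates the $c_i$'s; I would verify that $\langle m,\eta\rangle$-terms cancel using \ref{eq_2n+1_perp_eta}.

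\textbf{Step 2: surjectivity onto $H^\ast(X_P)^\sigma$.} Since $H^\ast(X_P)$ is generated in degree $2$ by the $x_i$, the invariant subring $H^\ast(X_P)^\sigma$ is spanned in degree $2$ by the averaged classes $x_i + x_{n+i}$ ($1\le i\le n$), $x_{2n+1}$, and $x_{2n+2}$. All of these except possibly individual $x_{n+i}$'s are visibly in the image. The element $\phi(x_\sigma) = \sum_i c_i x_{n+i}$ extends the image in degree $2$; I must argue that together with $x_i + x_{n+i}$ and the two fixed classes, these span the full degree-$2$ invariant part. The degree-$2$ invariants have dimension $n+2$ (one for each $\sigma$-orbit of edges: $n$ paired orbits plus the two fixed edges), and the map $\phi$ in degree $2$ is from an $(n+3)$-dimensional space $\mathrm{span}(x_1,\dots,x_n,x_{2n+1},x_{2n+2},x_\sigma)$ modulo the single linear relation coming from $\mathcal{J}_{(1-1)}$ realized... actually one checks the images $\phi(x_1),\dots,\phi(x_n),\phi(x_{2n+1}),\phi(x_{2n+2})$ already span an $(n+2)$-dimensional space inside degree-$2$ invariants after accounting for relations; hence $\phi$ is onto in degree $2$. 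Because the invariant ring is generated in degree $2$ (it is a subring of a ring generated in degree $2$, but one must be slightly careful — invariants of a graded algebra generated in degree $2$ need not themselves be generated in degree $2$ in general; however here one can instead invoke that $H^\ast(X_P)$ is a free module of rank $2$ over $H^\ast(X_P)^\sigma$ via a transfer/averaging argument, or simply check surjectivity degree by degree using the explicit presentation), surjectivity in degree $2$ plus multiplicativity of $\phi$ gives surjectivity in all degrees.

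\textbf{Main obstacle.} The delicate part is Step 2: controlling the top-degree behavior and confirming that one does not need more generators than $\{x_i\}_{i\le n}\cup\{x_{2n+1},x_{2n+2},x_\sigma\}$ to hit all invariants, i.e. that the relations do not conspire to shrink the image below the full invariant subring. I expect to handle this by a dimension count in each graded piece using Poincar\'e duality: by Proposition \ref{prop_PD} it suffices to prove that $\phi$ is surjective (indeed bijective) in top degree $4$, where both sides are one-dimensional, and then that $\phi$ is injective. For injectivity I would note $\phi$ is the composite of the pullback $H^\ast(X_{P/\sigma})\to H^\ast(X_P)$ along a degree-one-onto-orbifold-quotient type map, or argue directly that the kernel, being a proper graded ideal in a Poincar\'e duality algebra, must meet top degree, contradicting top-degree bijectivity. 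So the lemma reduces to: (a) Step 1 as above, and (b) a single top-degree computation that $\phi$ sends the fundamental class of $X_{P/\sigma}$ to (a generator of) the invariant part of the fundamental class of $X_P$, which amounts to comparing normalized volumes of $P/\sigma$ and $P$ — the factor being exactly $|W|=2$, matching the transfer.
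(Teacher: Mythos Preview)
Your Step 1 is essentially the paper's argument, but you should make it concrete: take $\{\eta_1=\eta,\ \eta_2=\lambda_{2n+1}\}$ as a basis of $N\otimes\QQ$ (using \ref{eq_2n+1_perp_eta}) and plug $m=\eta_1^\ast$ into the linear relation $\sum_i\langle m,\lambda_i\rangle x_i=0$. The $x_{2n+1},x_{2n+2}$ terms vanish by \ref{eq_2n+1_perp_eta}--\ref{eq_2n+2_parallel_2n+1}, and what remains is exactly $\sum_{i=1}^n\langle\eta^\ast,\lambda_i\rangle(x_i+x_{n+i})+\phi(x_\sigma)=0$, which shows $\phi(x_\sigma)$ is $\sigma$-invariant since the first sum visibly is.

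Step 2 is where your proposal has a genuine gap. Your dimension count is off in two places: $\mathcal{J}_{(1\text{-}1)}$ contributes \emph{two} linear relations (one for each generator of $M$), not one, so $\dim H^2(X_{P/\sigma})=n+1$; and on the target side you forgot that $\mathcal{J}$ also kills one dimension of invariants (the $\sigma$-invariant linear form), so $\dim H^2(X_P)^\sigma=n+1$, not $n+2$. More seriously, your pivot to Proposition~\ref{prop_PD} conflates this Lemma with the Theorem: the Poincar\'e-duality argument gives injectivity of $\phi$ once you know $\phi_4\neq 0$, but it does not by itself give that the image equals $H^\ast(X_P)^\sigma$ --- you still need a dimension comparison, which you never carry out. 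Your remark that invariants of a ring generated in degree~$2$ need not be generated in degree~$2$ is exactly the issue you leave unresolved.

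The paper's fix is short and avoids all counting: pass to the torus-equivariant cohomology $H^\ast_T(X_P)=\QQ[x_1,\dots,x_{2n+2}]/\mathcal{I}$. In this Stanley--Reisner ring the products $x_ix_{n+i}$ already lie in $\mathcal{I}$ (since $E_i\cap E_{n+i}=\emptyset$), so the $\sigma$-invariants are generated \emph{as an algebra} by $\{x_i+x_{n+i}\}_{i\le n}\cup\{x_{2n+1},x_{2n+2}\}$. The canonical surjection $H^\ast_T(X_P)\twoheadrightarrow H^\ast(X_P)$ is $\sigma$-equivariant, and over $\QQ$ taking $\sigma$-invariants is exact, so it restricts to a surjection $H^\ast_T(X_P)^\sigma\twoheadrightarrow H^\ast(X_P)^\sigma$. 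Since each generator upstairs is $\phi(x_i)$, this gives surjectivity onto the invariants immediately. This is the missing idea in your Step~2.
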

\begin{proof}
Obviously, $\phi(x_i)$ is $\sigma$-invariant for each  $i \in \{1, \dots, n, 2n+1, 2n+2\}$. Below, we prove that $\phi(x_\sigma)$ is $\sigma$-invariant. Since $\lambda_{2n+1}\perp \eta$, we take $\{\eta_1\colonequals \eta,\eta_2\colonequals \lambda_{2n+1} \}$ as a basis of $N$. Let $\{\eta_1^\ast, \eta_2^\ast \}\subset M$ be the dual basis of $\{\eta_1, \eta_2\}$, i.e., $\left<\eta_{i}^\ast, \eta_j\right>=\delta_{ij}$ for $1\leq i, j \leq 2$. 

Recall from \eqref{eq_cohom_XP} that 
\[
\sum_{i=1}^{2n+2}\left<m, \lambda_i \right> x_i =0\in H^\ast(X_P).
\]
Applying $m=\eta^\ast$ and  $\lambda_{n+i}=\lambda_{i}+c_i\eta$ for $i=1, \dots, n$, we have 
\begin{equation}\label{eq_linear_single_ref}
\sum_{i=1}^n\left<\eta^\ast, \lambda_{i}\right> x_i + \sum_{i=1}^n\left<\eta^\ast, \lambda_{i}\right> x_{n+i} +\sum_{i=1}^n c_ix_{n+i} 
=\sum_{i=1}^n\left<\eta^\ast, \lambda_{i}\right> (x_i+x_{n+i})+\phi(x_\sigma)
\end{equation}
which is $0$ in $H^\ast(X_P)$. Since the first term of the right-hand side of \eqref{eq_linear_single_ref} is $\sigma$-invariant, we conclude that so is $\phi(x_\sigma)$. Therefore, we have $\im \phi \subset H^\ast(X_P)^\sigma$. 

To show $H^\ast(X_P)^\sigma \subset \im \phi$, recall that the torus equivariant cohomology of $X_P$ is 
$H^\ast_T(X_P)=\QQ[x_1, \dots, x_{2n+2}]/\mathcal{I}$, where $\mathcal{I}$ is the ideal of \eqref{eq_ideal_polygon} (for instance, see \cite[Section 12.4]{CLS}). It is straightforward to see that the invariant ring $H^\ast_T(X_P)^\sigma$ is generated by $\{x_i+x_{n+i}\mid i=1, \dots, n\}\cup \{x_{2n+1}, x_{2n+2}\}$. Since the natural surjective homomorphism $H^\ast_T(X_P)\to H^\ast(X_P)$ is $\sigma$-equivariant, its restriction $H^\ast_T(X_P)^\sigma \to H^\ast(X_P)^\sigma$ is also surjective. 
\end{proof}

\begin{lemma}\label{lem_well_def}
The map $\phi$ is well-defined. 
\end{lemma}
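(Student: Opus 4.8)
The plan is to verify that $\phi$ respects the two defining ideals $\mathcal{I}_{\text{(1-1)}}$ and $\mathcal{J}_{\text{(1-1)}}$ of $H^\ast(X_{P/\sigma})$; since these generate the relations, checking that each generator maps to zero in $H^\ast(X_P)$ suffices. I would organize this as two separate verifications, one for each ideal.

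First I would handle the Stanley--Reisner-type ideal $\mathcal{I}_{\text{(1-1)}}$. For a generator $x_jx_k$ with $E_j\cap E_k=\emptyset$ and $j,k\in\{1,\dots,n,2n+1,2n+2\}$, the image $\phi(x_j)\phi(x_k)$ is a sum of products of the form $x_ax_b$ with $a\in\{j,n+j\}$ (or $a=j$ if $j\in\{2n+1,2n+2\}$) and similarly for $b$. Because $P$ is $\sigma$-symmetric and the edges of $P/\sigma$ are inherited from $P$, the non-adjacency $E_j\cap E_k=\emptyset$ in $P/\sigma$ forces the corresponding edges of $P$ (and their $\sigma$-images) to be pairwise non-adjacent as well — here I would argue directly from the cyclic arrangement of edges depicted in Figure~\ref{fig_symmetry_by_a_reflection}, case (1-1): the edges $E_1,\dots,E_n$ lie on one side and $E_{n+1},\dots,E_{2n}$ on the other, with $E_{2n+1},E_{2n+2}$ joining them, so two edges of the fundamental region are adjacent iff the corresponding pairs in $P$ are. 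Hence every term $x_ax_b$ lies in $\mathcal{I}$ and the product vanishes in $H^\ast(X_P)$. For the mixed generator $x_jx_\sigma$ with $1\le j\le n$, I would compute $\phi(x_j)\phi(x_\sigma)=(x_j+x_{n+j})\sum_{i=1}^n c_i x_{n+i}$; the edge $E_\sigma$ meets $P$ only at the two vertices lying on edges $E_{2n+1}$ and $E_{2n+2}$ of $P$, so $E_j$ and $E_{n+i}$ are each disjoint from the "would-be edge" whenever relevant, and more to the point $E_j\cap E_{n+i}=\emptyset$ for all $i$ unless $i=j$ gives an adjacency only at... — the cleaner route is: in $P/\sigma$ the new edge $E_\sigma$ is disjoint from all of $E_1,\dots,E_n$, and under $\sigma$-symmetry this means $E_j$ ($1\le j\le n$) is disjoint from every $E_{n+i}$ except possibly when $i=j$, but even then one checks from the figure that $E_j$ and $E_{n+j}$ share no vertex; thus each $x_jx_{n+i}\in\mathcal I$ and the product vanishes.

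Next I would treat the linear ideal $\mathcal{J}_{\text{(1-1)}}$. Fix $m\in M$. Applying $\phi$ to the generator $\sum_{i=1}^n\langle m,\lambda_i\rangle x_i+\langle m,\lambda_{2n+1}\rangle x_{2n+1}+\langle m,\lambda_{2n+2}\rangle x_{2n+2}+\langle m,\eta\rangle x_\sigma$ yields
\[
\sum_{i=1}^n\langle m,\lambda_i\rangle(x_i+x_{n+i})+\langle m,\lambda_{2n+1}\rangle x_{2n+1}+\langle m,\lambda_{2n+2}\rangle x_{2n+2}+\langle m,\eta\rangle\sum_{i=1}^n c_i x_{n+i}.
\]
Using $\lambda_{n+i}=\lambda_i+c_i\eta$ from \ref{eq_ci}, the coefficient of $x_{n+i}$ is $\langle m,\lambda_i\rangle+c_i\langle m,\eta\rangle=\langle m,\lambda_i+c_i\eta\rangle=\langle m,\lambda_{n+i}\rangle$, so the whole expression equals $\sum_{i=1}^{2n+2}\langle m,\lambda_i\rangle x_i$, which is exactly the generator of $\mathcal{J}$ for $H^\ast(X_P)$ and hence zero there. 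This is essentially the computation already performed in \eqref{eq_linear_single_ref} for $m=\eta^\ast$, now carried out for arbitrary $m$.

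The main obstacle I anticipate is the combinatorial bookkeeping in the $\mathcal{I}$-part: one must be careful that the new edge $E_\sigma$ and the identifications $E_i\leftrightarrow E_{n+i}$ do not create or destroy adjacencies in a way that breaks the argument, and this requires reading off the incidence structure from Figure~\ref{fig_symmetry_by_a_reflection} rather precisely (in particular distinguishing whether $n$ is such that certain edges collapse). The linear part, by contrast, is a short and essentially mechanical computation once \ref{eq_ci} is invoked. I would therefore spend most of the written proof on the careful case-analysis of which products $x_ax_b$ lie in $\mathcal{I}$, and dispatch $\mathcal{J}$ in a couple of lines.
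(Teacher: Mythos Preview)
Your treatment of $\mathcal{J}_{\text{(1-1)}}$ and of the generators $x_jx_k$ in $\mathcal{I}_{\text{(1-1)}}$ is fine and matches the paper. The gap is in your handling of $x_jx_\sigma$ for $1\le j\le n$. You write
\[
\phi(x_j)\phi(x_\sigma)=(x_j+x_{n+j})\sum_{i=1}^n c_i x_{n+i}
\]
and then argue that every $x_jx_{n+i}$ lies in $\mathcal{I}$. That is true, and it kills the term $x_j\sum_i c_i x_{n+i}$. But you have silently dropped the other half, $x_{n+j}\sum_i c_i x_{n+i}$. These products are \emph{not} all in $\mathcal{I}$: in the polygon of Case~(1-1) the edges $E_{n+1},\dots,E_{2n}$ form a consecutive chain, so $E_{n+j}$ meets $E_{n+j\pm 1}$ (and of course $x_{n+j}^2$ is not a Stanley--Reisner relation either). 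Thus $x_{n+j}\sum_i c_i x_{n+i}$ does not vanish by incidence combinatorics alone, and your argument stops short.

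The paper closes exactly this gap by invoking Lemma~\ref{lem_image_invariant}: since $\phi(x_\sigma)$ is $\sigma$-invariant, one may replace $x_{n+j}\phi(x_\sigma)$ by $x_{n+j}\,\sigma(\phi(x_\sigma))=x_{n+j}\sum_{i=1}^n c_i x_i$, and now every product $x_{n+j}x_i$ with $1\le i\le n$ \emph{does} lie in $\mathcal{I}$, because the two chains $E_1,\dots,E_n$ and $E_{n+1},\dots,E_{2n}$ are separated by $E_{2n+1}$ and $E_{2n+2}$. So the missing idea is precisely the use of the $\sigma$-invariance of $\phi(x_\sigma)$ already established in Lemma~\ref{lem_image_invariant}; without it, the vanishing of $\phi(x_jx_\sigma)$ cannot be read off from the face poset.
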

\begin{proof}
It suffices to show that $\ker \phi$ contains both $\mathcal{I}_{\text{(1-1)}}$ and $\mathcal{J}_{\text{(1-1)}}$. Indeed, 
\begin{align*}
&\phi\left( \sum_{i=1}^n \left< m, \lambda_i\right> x_i + \left< m, \lambda_{2n+1}\right>x_{2n+1}+ \left< m, \lambda_{2n+2}\right>x_{2n+2} + \left<m, \eta\right> x_\sigma \right) \\
&=\sum_{i=1}^n \left< m, \lambda_i\right> (x_i+x_{n+i}) + \left< m, \lambda_{2n+1}\right>x_{2n+1}+ \left< m, \lambda_{2n+2}\right>x_{2n+2} + \left<m, \eta\right> \sum_{i=1}^n c_ix_{n+i}\\
&=\sum_{i=1}^n\big(\left<m, \lambda_i\right>x_i +\left<m, \lambda_i+ c_i\eta\right>x_{n+i}\big) +\left< m, \lambda_{2n+1}\right>x_{2n+1}+ \left< m, \lambda_{2n+2}\right>x_{2n+2} \\
&=\sum_{i=1}^{2n+2} \left<m, \lambda_i\right>x_i \in \mathcal{J}. 
\end{align*}
For $x_jx_k\in \mathcal{J}_{\text{(1-1)}}$, we have 
\begin{align}\label{eq_phi_xixj}
\begin{split}
\phi(x_jx_k)=\begin{cases} (x_j+x_{n+j})(x_k+x_{n+k})& \text{if } 1\leq j,k \leq n;\\
(x_j+x_{n+j})x_{k} & \text{if } 1\leq j \leq n \text{ and } 2n+1\leq k \leq 2n+2;\\
x_j(x_{k}+x_{n+k}) & \text{if } 2n+1\leq j \leq 2n+2 \text{ and } 1\leq k \leq n;  \\
x_jx_k & \text{if } 2n+1\leq j\neq k \leq 2n+2.
\end{cases}
\end{split}
\end{align}
In any of above cases, $\phi(x_jx_k)\in \mathcal{J}$. Also, for $x_jx_\sigma\in \mathcal{J}_{\text{(1-1)}}$ with $1\leq j \leq n$, we have 
\begin{align*}
\phi(x_jx_\sigma)&=x_j\phi(x_\sigma)+x_{n+j}\phi(x_\sigma)\\
&=x_j\phi(x_\sigma)+x_{n+j}\sigma \left(\phi(x_\sigma)\right)\\
&=x_j\sum_{i=1}^nc_ix_{n+i} + x_{n+j}\sum_{i=1}^nc_ix_{i}=0,
\end{align*}
where the second equality follows from Lemma \ref{lem_image_invariant} and the last equality follows from \eqref{eq_ideal_polygon}. 
Hence, we have established the claim. 
\end{proof}

Since the automorphism $X_P$ induced from the reflection $\sigma$ is orientation preserving, $H^4(X_P)^\sigma\cong H^4(X_P/\sigma)$ is $1$-dimensional.  We also note that $H^\ast(X_{P/\sigma})$ is a Poincar\'e duality algebra and $H^4(X_{P/\sigma})$ is also $1$-dimensional as it is an orientable orbifold. 
Therefore, the map $\phi$ onto its image is an isomorphism by Proposition~\ref{prop_PD}, which completes the proof of Theorem \ref{thm_main} for Case (1-1).

\subsection{Proof of Theorem \ref{thm_main} for Case (1-2)}
We adhere to the notations discussed in the previous subsection. Then, $P$ is equipped with the conditions \ref{eq_ci} and~\ref{eq_2n+1_perp_eta}. The cohomology ring of $X_{P/\sigma}$ is given by 
\[
H^\ast(X_{P/\sigma}) = \QQ[x_1, \dots, x_n, x_{2n+1}, x_\sigma]/\mathcal{I}_{\text{(1-2)}}+\mathcal{J}_{\text{(1-2)}}
\]
where $\mathcal{I}_{\text{(1-2)}}$ is the ideal generated by
\[
\left\{ x_jx_k\mid  E_j \cap E_k = \emptyset  \text{ for } j,k \in \{1, \dots, n, 2n+1\} \right\}\cup \{x_jx_\sigma\mid 2\leq j\leq n\}
\]
and $\mathcal{J}_{\text{(1-2)}}$ is the ideal generated by 
\[
\left\{ \sum_{i=1}^n \left< m, \lambda_i\right> x_i + \left< m, \lambda_{2n+1}\right>x_{2n+1}+ \left<m, \eta\right> x_\sigma ~\Big|~  m\in M\right\}.
\]

Now, we consider the map $\phi \colon H^\ast(X_{P/\sigma}) \to H^\ast(X)$ defined in \eqref{eq_map_phi} except for the variable $x_{2n+2}$. With this setup, we have Lemmas \ref{lem_image_invariant} and \ref{lem_well_def} just by forgetting all monomials containing $x_{2n+2}$. The rest of the proof for Case (1-1) discussed in the previous subsection are valid for this case as well.  

\subsection{Proof of Theorem \ref{thm_main} for Case (1-3)}
In this case, $P$ satisfies \ref{eq_ci} only. 
The cohomology ring of $X_{P/\sigma}$ of this case is given by 
\[
H^\ast(X_{P/\sigma}) = \QQ[x_1, \dots, x_n,  x_\sigma]/\mathcal{I}_{\text{(1-3)}}+\mathcal{J}_{\text{(1-3)}}
\]
where $\mathcal{I}_{\text{(1-3)}}$ is the ideal generated by
\[
\left\{ x_jx_k\mid  E_j \cap E_k = \emptyset  \text{ for } j,k \in \{1, \dots, n\} \right\}\cup \{x_jx_\sigma\mid 2\leq j\leq n-1\}
\]
and $\mathcal{J}_{\text{(1-3)}}$ is the ideal generated by 
\[
\left\{ \sum_{i=1}^n \left< m, \lambda_i\right> x_i +  \left<m, \eta\right> x_\sigma ~\Big|~  m\in M\right\}.
\]
Here we also consider the map $\phi$ defined in \eqref{eq_map_phi} except for  variables $x_{2n+1}$ and $x_{2n+2}$. For the proof of Lemma \ref{lem_image_invariant}, one can take $\eta_2\in N$ such that $\lambda_i +\lambda_{n+i}=d_i\eta_2$ for some $d_i\in \ZZ$ for each $1\leq i \leq n$,  which plays a role of $\lambda_{2n+1}$, namely, $\{\eta_1=\eta, \eta_2\}$ form a basis of $N$. The proof of Lemma \ref{lem_well_def} and the rest of proof is straightforward.


\section{Symmetry by a dihedral group}\label{sec_two_ref}
In this section, we consider a polygon $P \subset M\otimes_\ZZ \RR$ with an action of the dihedral group $W\colonequals D_{2\ell}$ of \eqref{eq_dihedral_gp}. The fixed point set of $P$ by $s_i$ is given by
\[
E_{s_i} \colonequals \{\mathbf{x}\in P \mid s_i(x)=x\} = \{\mathbf{x}\in P \mid \left< \mathbf{x}, \eta_i \right>=0\}
\]
for some primitive vector $\eta_i\in N$ and for each $i=1, 2$. We define the fundamental region of $P$ with respect to the $W$-action by 
\begin{equation}\label{eq_fund_region_D2k}
P/W\colonequals\{\mathbf{x}\in P \mid \left< \mathbf{x}, \eta_1\right> \leq 0 \} \cap \{\mathbf{x}\in P \mid \left< \mathbf{x}, \eta_2\right> \leq 0 \}.
\end{equation}
Similarly to the discussion of Section \ref{sec_sing_ref}, we consider three cases of $P$ and symmetries by $W$ as described in Figure \ref{fig_D2k}. 

\begin{figure}
\begin{tikzpicture}
\node[opacity=0, regular polygon, regular polygon sides=12, draw, minimum size = 3cm](m) at (0,0) {};
\foreach \x in {1,...,12}{
\coordinate (\x) at (m.corner \x); 
}

\draw[fill=yellow, yellow] (0,0)--(m.side 1)--(1)--(12)--(m.side 11)--cycle;
\foreach \a in {1,2,11,12} {
\draw[fill] (\a) circle (1pt); 
}

\draw ($0.8*(2)+0.2*(3)$)--(2)--(1)--($0.8*(1)+0.2*(12)$);
\draw ($0.8*(11)+0.2*(10)$)--(11)--(12)--($0.8*(12)+0.2*(1)$);

\node[rotate=-30] at (m.side 12) {\scriptsize$\cdots$};
\node[rotate=30] at (m.side 2) {\scriptsize$\cdots$};

\node[rotate=90] at (m.side 10) {\scriptsize$\cdots$};

\draw[fill] (0,0) circle (1pt); 
\node[below right] at (0,0) {$O$};

\draw[blue] ($1.3*(m.side 1)$)--($0.5*(7)+0.5*(8)$);
\node at ($1.4*(m.side 1)$) {$s_1$};
\node at ($1.15*(m.side 1)$) {$\curvearrowleftright$};

\draw[blue] ($1.3*(m.side 11)$)--($0.5*(5)+0.5*(6)$);
\node at ($1.45*(m.side 11)$) {$s_2$};
\node[rotate=-60] at ($1.15*(m.side 11)$) {$\curvearrowleftright$};

\node at (0,-2.2) {Case (2-1)};

\begin{scope}[xshift=130]
\node[opacity=0, regular polygon, regular polygon sides=12, draw, minimum size = 3cm](m) at (0,0) {};
\foreach \x in {1,...,12}{
\coordinate (\x) at (m.corner \x); 
}

\draw[fill=yellow, yellow] (0,0)--(m.side 1)--(1)--(12)--(11)--cycle;
\foreach \a in {11,12,1,2} {
\draw[fill] (\a) circle (1pt); 
}
\node[rotate=-30] at (m.side 12) {\scriptsize$\cdots$};
\node[rotate=30] at (m.side 2) {\scriptsize$\cdots$};

\node[rotate=75] at ($0.95*(10)$) {\scriptsize$\cdots$};
\draw[fill] (0,0) circle (1pt); 
\node[below right] at (0,0) {$O$};

\draw ($0.8*(2)+0.2*(3)$)--(2)--(1)--($0.8*(1)+0.2*(12)$);
\draw (m.side 10)--(11)--(12)--($0.8*(12)+0.2*(1)$);

\draw[blue] ($1.3*(m.side 1)$)--($0.5*(7)+0.5*(8)$);
\node at ($1.4*(m.side 1)$) {$s_1$};
\node at ($1.15*(m.side 1)$) {$\curvearrowleftright$};

\draw[blue] ($1.3*(11)$)--($(5)$);
\node at ($1.45*(11)$) {$s_2$};
\node[rotate=-75] at ($1.15*(11)$) {$\curvearrowleftright$};

\node at (0,-2.2) {Case (2-2)};
\end{scope}

\begin{scope}[xshift=260]
\node[opacity=0, regular polygon, regular polygon sides=12, draw, minimum size = 3cm](m) at (0,0) {};
\foreach \x in {1,...,12}{
\coordinate (\x) at (m.corner \x); 
}

\draw[fill=yellow, yellow] (0,0)--(1)--(m.side 12)--(m.side 11)--(11)--cycle;
\foreach \a in {11,1} {
\draw[fill] (\a) circle (1pt); 
}
\node[rotate=-45] at ($0.95*(12)$) {\scriptsize$\cdots$};

\node[rotate=75] at ($0.95*(10)$) {\scriptsize$\cdots$};
\draw[fill] (0,0) circle (1pt); 
\node[below right] at (0,0) {$O$};

\draw (m.side 1)--(1)--(m.side 12);
\draw (m.side 11)--(11)--(m.side 10);

\draw[blue] ($1.3*(1)$)--($(7)$);
\node at ($1.4*(1)$) {$s_1$};
\node[rotate=-15] at ($1.15*(1)$) {$\curvearrowleftright$};

\draw[blue] ($1.3*(11)$)--($(5)$);
\node at ($1.45*(11)$) {$s_2$};
\node[rotate=-75] at ($1.15*(11)$) {$\curvearrowleftright$};

\node at (0,-2.2) {Case (2-3)};
\end{scope}

\end{tikzpicture}
\caption{Polygons with symmetries by $D_{2k}$ and their fundamental regions.}
\label{fig_D2k}
\end{figure}
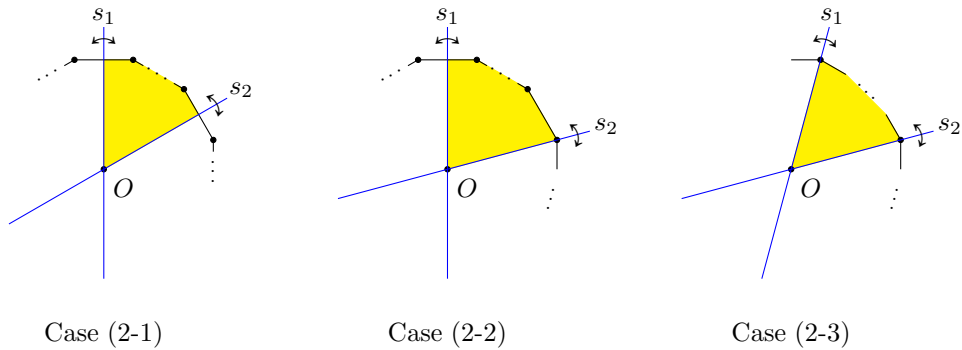

\subsection{Proof of Theorem \ref{thm_main} for Case (2-1)}\label{subsec_Case_2_1}
Notice that the polygon $P$ can be divided into $|W|$-copies of fundamental regions, each of which bijectively corresponds to elements of $W$. We fix the fundamental region $P/W$ of \eqref{eq_fund_region_D2k} as the one corresponding to the identity element $id \in W$. 

We label facets of $P/W$ as in Figure \ref{fig_facets_fund_region_C_2_1}-(i). Note that the facets $E_{1}$ and $E_{n}$ are fixed by $s_1$ and $s_2$, respectively. Each facet $E_{j}$ for $2\leq j \leq  n-1$ has the trivial stabilizer. Then, the labels on facets of $P/W$ naturally induces the labels on facets of $P$ by taking $W$-action on $\{E_{j}\mid 1\leq j \leq n\}$. To be more precise, let 
\[
 \prescript{s_i}{}{W}\colonequals \{u\in W \mid l(us_i) \geq l(u) \}
\]
be the minimal length left coset representatives of $s_i$ for $i=1,2$, where $l(u)$ denotes the length  of $u\in W$.  Then, the set $\mathcal{F}(P)$ of facets of $P$ is 
\begin{align}
\nonumber \mathcal{F}(P)& =\{u(E_j) \mid u\in W,~ 1\leq j \leq n \}\\
\label{eq_facet_decomp}&=\{u(E_1) \mid u\in { \prescript{s_1}{}{W}} \} \sqcup \left(\bigsqcup_{j=2}^{n-1}\{u(E_j) \mid u\in W\}\right)  \sqcup \{u(E_n) \mid u\in  { \prescript{s_2}{}{W}}\}
\end{align}
for $n\geq 2$. See Remark \ref{rmk_n_2} below for the case where $n=2$. The example of $12$-gon with $D_{6}$-symmetry is illustrated in See Figure \ref{fig_facets_fund_region_C_2_1}-(ii). 

\begin{remark}\label{rmk_n_2}
It is also possible that the middle term 
\begin{equation}\label{eq_middle_term_of_facets}
\bigsqcup_{j=2}^{n-1}\{u(E_j) \mid u\in W\}
\end{equation}
of \eqref{eq_facet_decomp} above is an empty set. An example of this type will be discussed in Section~\ref{sec_G_2}. In what follows, we pretend to have elements of \eqref{eq_middle_term_of_facets} for the full generality. For the case where \eqref{eq_middle_term_of_facets} is empty, omitting this term and assuming $n=2$ will lead us the proof. 
\end{remark}

\begin{figure}
\begin{tikzpicture}
\begin{scope}[rotate=7.5, scale=0.9]
\foreach \x in {1,2,...,6} {
\coordinate (\x) at (90-15*\x+15: 3); 
}

\draw[fill=yellow, yellow] ($1/2*(1)+1/2*(2)$)--(2)--(3)--(4)--(5)--($1/2*(5)+1/2*(6)$)--(0,0)--cycle;
\foreach \x in {1,2,...,6} {
\draw[fill] (\x) circle (1pt); 
}
\draw[fill] (0,0) circle (1pt);

\draw[blue] ($0.6*(1)+0.6*(2)$)--($-0.1*(1)-0.1*(2)$);
\draw[blue] ($0.6*(5)+0.6*(6)$)--($-0.1*(5)-0.1*(6)$);

\draw ($0.8*(1)+0.2*(105:3)$)--(1)--(2)--($0.8*(2)+0.2*(3)$);
\draw ($0.8*(3)+0.2*(2)$)--(3)--(4)--($0.8*(4)+0.2*(5)$);
\draw ($0.8*(5)+0.2*(4)$)--(5)--(6)--($0.8*(6)+0.2*(0:3)$);

\node[rotate=-15] at (75-7.5:3) {\scriptsize$\cdots$};
\node[rotate=-45] at (45-7.5:3) {\scriptsize$\cdots$};

\node[above] at ($0.5*(1)+0.5*(2)$) {\footnotesize$E_{1}$};
\node[rotate=-30] at ($0.55*(3)+0.55*(4)$) {\footnotesize$E_{j}$};
\node[rotate=-60] at ($0.55*(5)+0.55*(6)$) {\footnotesize$E_{n}$};

\node[left] at ($0.25*(1)+0.25*(2)$) {\footnotesize$E_{s_1}$};
\node[below right] at ($0.25*(5)+0.25*(6)$) {\footnotesize$E_{s_2}$};
\end{scope}

\begin{scope}[xshift=220, yshift=35]
\node[regular polygon, regular polygon sides=12, draw, minimum size = 3cm](m) at (0,0) {};
\foreach \x in {1,...,12}{
\coordinate (\x) at (m.corner \x); 
\draw[fill] (\x) circle (1pt); 
}

\draw[blue, dashed] ($0.7*(1)+0.7*(2)$)--($-0.5*(1)-0.5*(2)$);
\node at ($0.8*(1)+0.8*(2)$) {$s_1$};

\draw[blue, dashed] ($0.8*(11)+0.8*(12)$)--($-0.5*(11)-0.5*(12)$);
\node at ($0.9*(11)+0.9*(12)$) {$s_2$};

\draw[blue, dashed] (m.side 9)--(m.side 3);

\node[above] at (m.side 1) {\scriptsize$E_{1}$};
\node[rotate=0] at ($1.15*(m.side 12)$) {\scriptsize$E_{2}$};
\node[rotate=0] at ($1.2*(m.side 11)$) {\scriptsize$E_{3}$};

\node[rotate=0, right] at (m.side 10) {\scriptsize$s_2(E_{2})$};
\node[rotate=0] at ($1.3*(m.side 9)$) {\scriptsize$s_2(E_1)$};
\node[rotate=0] at ($0.8*(3)+0.4*(2)$) {\scriptsize$s_1(E_2)$};
\node[rotate=0] at ($1.3*(m.side 3)$) {\scriptsize$s_1(E_3)$};
\node[left] at ($(m.side 4)$) {\scriptsize$s_1s_2(E_2)$};
\node at ($1*(5)+0.4*(6)$) {\scriptsize$s_1s_2(E_1)$};
\node[below left] at (m.side 6) {\scriptsize$s_1s_2s_1(E_2)$};
\node[below] at (m.side 7) {\scriptsize$s_2s_1(E_{3})$};
\node[below right] at (m.side 8) {\scriptsize$s_2s_1(E_{2})$};

\end{scope}

\end{tikzpicture}
\caption{(i) Labeling on facets of $P/W$ of Case (2-1); (ii) Example: Labeling on facets of 12-gon with $D_{6}$-symmetry.}
\label{fig_facets_fund_region_C_2_1}
\end{figure}
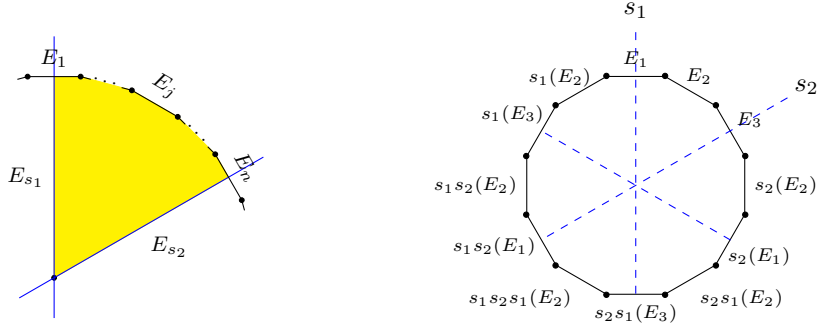

With this setup, we begin with rewriting the cohomology $H^\ast(X_P)$ of \eqref{eq_cohom_XP} as
\[
\QQ[x_E\mid E\in \mathcal{F}(P) ]/\mathcal{I}+\mathcal{J}
\]
where $\mathcal{I}=\left< x_{E}x_{E'} \mid E\cap E'=\emptyset\right>$ and 
$\mathcal{J}=\left< \sum_{E\in \mathcal{F}(P)}\left<m, \lambda(F)\right>x_E \mid m\in M \right>$. Then, $W$-module structure on $H^\ast(X_P)$ is given by $u(x_E)\colonequals x_{u(E)}$ for each $u\in W$. 

Recall from \eqref{eq_fund_region_D2k} that $\eta_1$ and $\eta_2$ are (outward) vectors to define facets $E_{s_1}$ and $E_{s_2}$, respectively. Let $\lambda(u(E_j))\in N$ be the  vector defining the facet $u(E_j)$ for~$u\in W$. Observe that for each $1\leq i\leq 2$ and $1\leq j\leq n$, we have 
\[
\lambda(s_i u(E_j))-\lambda(u(E_j))=c\cdot \eta_i
\]
for some $c\in \QQ$. If $u(E_j)$ is fixed by $s_i$, then $c=0$ obviously. Therefore, if $u=s_{i_r}s_{i_{r-1}}\cdots s_{i_1}$ is a reduced word expression of $u$, then 
\begin{align*}
\lambda(u(E_j))-\lambda(E_j)
&=\left(\lambda(s_{i_r}\cdots s_{i_1}(E_j))- \lambda(s_{i_{r-1}}\cdots s_{i_1}(E_j))\right) + \\
&\qquad \left(\lambda(s_{i_{r-1}}\cdots s_{i_1}(E_j))- \lambda(s_{i_{r-2}}\cdots s_{i_1}(E_j))\right)+ \cdots \\
& \qquad \cdots + \big(\lambda(s_{i_2}s_{i_1}(E_j))-\lambda(s_{i_1}(E_j))\big) +  \big(\lambda(s_{i_1}(E_j))-\lambda(E_j)\big)\\
&=\sum_{\substack{1\leq k \leq r; \\  i_k=1}}c_{u,j,k}\eta_1 + \sum_{\substack{1\leq k \leq r; \\  i_k=2}}d_{u,j, k}\eta_2
\end{align*}
for some $c_{u,j,k}, d_{u,j,k} \in \QQ$. Writing
$c_{u,j}\colonequals \sum_{\substack{1\leq k \leq r \\  i_k=1}}c_{u,j,k}$ and  
$d_{u,j}\colonequals \sum_{\substack{1\leq k \leq r \\  i_k=2}}d_{u,j, k}$
for simplicity,  we have 
\begin{equation}\label{eq_coeff}
\lambda(u(E_j))-\lambda(E_j)=c_{u,j}\eta_1+d_{u,j}\eta_2.
\end{equation}
We notice that 
\begin{equation}\label{eq_coef_c_d_zero}
c_{id,j}=c_{s_2,j}=d_{id, j}=d_{s_1,j}=0
\end{equation} 
for all $j$, which follows immediately from the definition. 

Now, we apply \eqref{eq_cohom_XP} to the toric surfaces $X_{P/W}$, which gives us
\begin{equation}\label{eq_cohom_XPG}
H^\ast(X_{P/W})=\QQ\left[ x_1, \dots, x_n, x_{s_1}, x_{s_2}\right]/\mathcal{I}_{\text{(2-1)}}+\mathcal{J}_{\text{(2-1)}},
\end{equation}
where $\mathcal{I}_{\text{(2-1)}}$ is the ideal generated by 
\[
\{ x_ix_j \mid E_i\cap E_j =\emptyset,~ 1\leq i, j \leq n\} \cup \{x_ix_{s_1} \mid 2\leq i \leq n\} \cup  \{x_ix_{s_2} \mid 1\leq i \leq n-1\} 
\]
and $\mathcal{J}_{\text{(2-1)}}$ is the ideal generated by 
\[
\left\{
\sum_{j=1}^n\left<m, \lambda(E_j)\right>x_j +\sum_{j=1}^2 \left<m, \eta_j\right>x_{s_j} ~\Big|~ m\in M
\right\}.
\]

We define a map 
\begin{equation}\label{eq_psi}
\psi \colon H^\ast(X_{P/W}) \to H^\ast(X_P)
\end{equation}
by 
\begin{align*}
x_j&\mapsto \begin{cases} 
\sum_{u\in { \prescript{s_1}{}{W}} }x_{u(E_1)}, &j=1;\\
\sum_{u\in W}x_{u(E_j)}, &j=2, \dots,  n-1;\\
\sum_{u\in { \prescript{s_2}{}{W}}}x_{u(E_n)}, &j=n,
\end{cases}\\
x_{s_1}&\mapsto \sum_{u\in  { \prescript{s_1}{}{W}}}  c_{u,1}x_{u(E_1)}+ \sum_{j=2}^{n-1} \sum_{u\in W}c_{u,j}x_{u(E_j)} + \sum_{u\in { \prescript{s_2}{}{W}} }c_{u,n}x_{u(E_n)} \\
x_{s_2}&\mapsto \sum_{u\in { \prescript{s_1}{}{W}}}  d_{u,1}x_{u(E_1)}+ \sum_{j=2}^{n-1} \sum_{u\in W}d_{u,j}x_{u(E_j)} + \sum_{u\in { \prescript{s_2}{}{W}}}d_{u,n}x_{u(E_n)}
\end{align*}
where $c_{u, j}$'s and $d_{u, j}$'s are defined in \eqref{eq_coeff}. 
Now, we prove the following two lemmas parallel to Lemmas \ref{lem_image_invariant} and \ref{lem_well_def}, respectively. 
\begin{lemma}\label{lem_image_invariant_case_2_1}
The image of $\psi$ is the invariant ring $H^\ast(X_P)^W$. 
\end{lemma}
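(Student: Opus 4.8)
The plan is to mirror the structure of the proof of Lemma~\ref{lem_image_invariant} from the single-reflection case, now carried out for the dihedral group $W=D_{2\ell}$. The argument splits into two inclusions: $\im\psi\subseteq H^\ast(X_P)^W$ and $H^\ast(X_P)^W\subseteq\im\psi$. For the first inclusion, I would observe that $\psi(x_j)$ is manifestly $W$-invariant for $j=2,\dots,n-1$, since $\sum_{u\in W}x_{u(E_j)}$ is a sum over a full $W$-orbit; for $j=1$ and $j=n$ the sums run over the coset representatives $\prescript{s_1}{}{W}$ and $\prescript{s_2}{}{W}$ respectively, and these are $W$-invariant because the orbit of $E_1$ (resp.\ $E_n$) has stabilizer $\langle s_1\rangle$ (resp.\ $\langle s_2\rangle$), so summing over minimal-length coset representatives is the same as averaging over the whole orbit with the correct multiplicity. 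The genuinely substantive point is that $\psi(x_{s_1})$ and $\psi(x_{s_2})$ are $W$-invariant, and this I would extract from the linear relations in $\mathcal{J}$ exactly as in~\eqref{eq_linear_single_ref}: choosing $m=\eta_1^\ast$ (the element of $M$ dual to a suitable basis $\{\eta_1,\eta_2'\}$ of $N$, using that $s_1,s_2$ give independent hyperplanes), the relation $\sum_{E\in\mathcal{F}(P)}\langle m,\lambda(E)\rangle x_E=0$ combined with~\eqref{eq_coeff} rewriting $\lambda(u(E_j))=\lambda(E_j)+c_{u,j}\eta_1+d_{u,j}\eta_2$ produces $\psi(x_{s_1})$ plus a term built only out of the $W$-invariant classes $\psi(x_j)$; since the left side is $0$ and the other summand is $W$-invariant, $\psi(x_{s_1})$ is too. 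The same with $m=\eta_2^\ast$ handles $\psi(x_{s_2})$.

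For the reverse inclusion I would use the torus-equivariant cohomology $H^\ast_T(X_P)=\QQ[x_E\mid E\in\mathcal{F}(P)]/\mathcal{I}$ and the fact (as in Lemma~\ref{lem_image_invariant}) that the surjection $H^\ast_T(X_P)\twoheadrightarrow H^\ast(X_P)$ is $W$-equivariant, so it restricts to a surjection $H^\ast_T(X_P)^W\twoheadrightarrow H^\ast(X_P)^W$. It therefore suffices to show $H^\ast_T(X_P)^W$ is generated as an algebra (over $\QQ$, or rather as a quotient of a polynomial ring) by the orbit-sums $\{\sum_{u\in W}x_{u(E_j)}\}$ together with the ``partial'' orbit-sums over $\prescript{s_i}{}{W}$ for the two stabilized facets. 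The key combinatorial input is that the $x_E$ are permuted by $W$ with at most the single relations $x_Ex_{E'}=0$ for non-adjacent edges, and a monomial in a polynomial ring on which a finite group acts by permuting variables has invariant subring generated by orbit-sums of monomials; once one quotients by $\mathcal{I}$, a product of two variables from distinct $W$-orbits of adjacent edges again has $W$-orbit sum expressible in the target generators — this is where I would need to check that the orbit of a product $x_{u(E_j)}x_{u(E_{j+1})}$ of adjacent edges, summed over $u$, lies in the subalgebra generated by $\psi(x_j),\psi(x_{j+1})$ and the reflection classes. This bookkeeping over adjacency patterns of $\mathcal{F}(P)$, organized via the decomposition~\eqref{eq_facet_decomp}, is the main obstacle; everything else is formal.

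I expect the hardest part to be precisely the surjectivity direction when the stabilized edges $E_1,E_n$ are involved, because there the relevant generator is a partial orbit-sum $\sum_{u\in\prescript{s_1}{}{W}}x_{u(E_1)}$ rather than a full one, and I must confirm that the ``missing'' term $\sum_{u\in\prescript{s_1}{}{W}}c_{u,1}x_{u(E_1)}$ (which is part of $\psi(x_{s_1})$) together with the full-orbit generators suffices to recover every $W$-invariant polynomial — in effect, that the two classes $\psi(x_1)$ and the $E_1$-component of $\psi(x_{s_1})$ jointly span the $\langle s_1\rangle$-invariants supported on the $E_1$-orbit inside $H^\ast_T(X_P)$. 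This reduces, on the $E_1$-orbit (which has a cyclic structure under $W$ broken only by the $s_1$-fixed edge), to a rank-two linear-algebra statement analogous to the computation with $\eta_1^\ast,\eta_2^\ast$ above, and I would dispatch it by the same dual-basis argument. Once both inclusions are in hand the lemma follows; I would then note that, as in Case~(1-1), the subsequent well-definedness lemma and the Poincar\'e-duality argument of Proposition~\ref{prop_PD} complete the proof of Theorem~\ref{thm_main} for Case~(2-1).
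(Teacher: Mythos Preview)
Your approach is essentially the paper's: the inclusion $\im\psi\subseteq H^\ast(X_P)^W$ is obtained exactly as you say, by pairing the linear relation in $\mathcal{J}$ against the dual basis $\{\eta_1^\ast,\eta_2^\ast\}$ of $\{\eta_1,\eta_2\}$ and using~\eqref{eq_coeff} to split off $\psi(x_{s_i})$ from a manifestly $W$-invariant sum; and the reverse inclusion goes through the $W$-equivariant surjection $H^\ast_T(X_P)\twoheadrightarrow H^\ast(X_P)$.

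The one place you overcomplicate is your ``hardest part.'' You do \emph{not} need $\psi(x_{s_1})$ or $\psi(x_{s_2})$ (nor any ``$E_1$-component'' of them) to generate $H^\ast_T(X_P)^W$. The paper's observation is that, modulo the Stanley--Reisner ideal $\mathcal{I}$, any $W$-orbit sum of a degree-two monomial satisfies
\[
\sum_{w\in W}x_{wu(E_i)}x_{wv(E_j)}\;=\;\Bigl(\sum_{u'\in W}x_{u'(E_i)}\Bigr)\Bigl(\sum_{v'\in W}x_{v'(E_j)}\Bigr),
\]
because on the right-hand side only products of adjacent edges survive, and those are precisely the $W$-translates of the original adjacent pair. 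Hence the orbit sums $\psi(x_1),\dots,\psi(x_n)$ alone already generate $H^\ast_T(X_P)^W$. The stabilized edges $E_1,E_n$ cause no extra trouble: their $W$-orbits are still transitive (just of size $|W|/2$), so the degree-one invariants supported there are one-dimensional and spanned by $\psi(x_1)$, $\psi(x_n)$; and in Case~(2-1) no two $W$-translates of $E_1$ (or of $E_n$) are adjacent, so there is no rank-two linear algebra to do.
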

\begin{proof}
First, it is obvious to see that each $\psi(x_j)$ for $j=1, \dots, n$ is $W$-invariant. We assert that $\psi(x_{s_1})$ and $\psi(x_{s_2})$ are $W$- invariant as well. Since $\eta_1$ and $\eta_2$ are linearly independent in $N\otimes_\ZZ \RR$, we take its dual basis $\{\eta_1^\ast, \eta_2^\ast\}\in M\otimes_\ZZ\RR$. 

Recall that the linear ideal $\mathcal{J}$ is generated by 
\begin{equation}\label{eq_elements_in_J_Case_2_1}
\sum_{u\in  { \prescript{s_1}{}{W}}}  \left<m, \lambda(u(E_1))\right> x_{u(E_1)}+ \sum_{j=2}^{n-1} \sum_{u\in W}\left<m, \lambda(u(E_j))\right> x_{u(E_j)} + \sum_{u\in  { \prescript{s_2}{}{W}} }\left<m, \lambda(u(E_n))\right> x_{u(E_n)}
\end{equation}
for $m\in M$. 
Applying $m=\eta_1^\ast$ and $\lambda(u(E_j))=c_{u,j}\eta_1 + d_{u,j}\eta_2+\lambda(E_j)$ of~\eqref{eq_coeff} to~\eqref{eq_elements_in_J_Case_2_1}, we get 
\begin{align}\label{eq_J_applying_eta_star}
\begin{split}
&\sum_{u\in  { \prescript{s_1}{}{W}}}  c_{u,1}x_{u(E_1)}+ \sum_{j=2}^{n-1} \sum_{u\in W}c_{u,j}x_{u(E_j)} + \sum_{u\in  { \prescript{s_2}{}{W}} }c_{u,n}x_{u(E_n)} + \\
&\sum_{u\in  { \prescript{s_1}{}{W}}}  \left<\eta_1^\ast, \lambda(E_1)\right> x_{u(E_1)}+ \sum_{j=2}^{n-1} \sum_{u\in W}\left<\eta_1^\ast, \lambda(E_j)\right> x_{u(E_j)} + \sum_{u\in  { \prescript{s_2}{}{W}} }\left<\eta_1^\ast, \lambda(E_n)\right> x_{u(E_n)},
\end{split}
\end{align}
which is 0 in $H^\ast(X_P)$ as it is an element of $\mathcal{J}$. Notice that the first three terms of~ \eqref{eq_J_applying_eta_star} are $\phi(x_{s_1})$ and the sum of the last three terms is $W$-invariant. Therefore, we conclude $\phi(x_{s_1})$ is $W$-invariant. Similarly, we apply $m=\eta_2^\ast$ together with  $\lambda(u(E_j))=c_{u,j}\eta_1 + d_{u,j}\eta_2+\lambda(E_j)$, one can see that $\phi(x_{s_2})$ is $W$-invariant as well. 

It remains to prove that  $H^\ast(X_P)^W \subset \im \psi$. The proof is similar to the proof of Lemma \ref{lem_image_invariant}. Indeed, the torus equivariant cohomology of $X_P$ is 
\[
H^\ast_T(X_P)=\QQ[x_E \mid E\in \mathcal{F}(P)]/\mathcal{I},
\] 
where $\mathcal{I}=\left< x_Ex_{E'}\mid E\cap E' =\emptyset\right>$. Observe that the invariant ring $H^\ast_T(X_P)^W$ is generated by 
\begin{equation}\label{eq_generator}
\left\{\sum_{u\in W} x_{u(E_i)} ~\Big|~ i=1, \dots, n\right\}= \big\{\psi(x_i) \mid i=1, \dots, n\big\}. 
\end{equation}
Indeed, if an element $y\in H^\ast_T(X_P)^W$ contains a monomial $cx_{u(E_i)}x_{v(E_j)}$ with nontrivial coefficient $c\in \QQ$, i.e., $u(E_i)$ and $v(E_j)$ intersect, then $y$ must contains 
\begin{equation}
c\cdot \sum_{w\in W}x_{wu(E_i)}x_{wv(E_j)}=c\cdot \sum_{u\in W}x_{u(E_i)} \cdot \sum_{v\in W}x_{v(E_j)},
\end{equation}
where the equality follows because of the ideal $\mathcal{I}$. 
Since the natural surjective homomorphism $H^\ast_T(X_P)\to H^\ast(X_P)$ is $W$-equivariant, the restriction $H^\ast_T(X_P)^W \to H^\ast(X_P)^W$ is also surjective. 
\end{proof}

\begin{lemma}\label{lem_well_def_case_2_1}
The map $\psi$ is well-defined. 
\end{lemma}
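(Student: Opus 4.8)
The plan is the same as for Lemma~\ref{lem_well_def}: show that every generator of $\mathcal{I}_{\text{(2-1)}}$ and of $\mathcal{J}_{\text{(2-1)}}$ lies in the kernel of the lift of $\psi$ to $\QQ[x_1,\dots,x_n,x_{s_1},x_{s_2}]$, the only new feature being that $P$ is tiled by $|W|$ copies of $P/W$ rather than two. For the generator of $\mathcal{J}_{\text{(2-1)}}$ attached to $m\in M$, applying $\psi$ and collecting the coefficient of each $x_{u(E_j)}$ gives $\langle m,\lambda(E_j)\rangle+c_{u,j}\langle m,\eta_1\rangle+d_{u,j}\langle m,\eta_2\rangle=\langle m,\lambda(u(E_j))\rangle$ by \eqref{eq_coeff}; summing over the facet decomposition \eqref{eq_facet_decomp} this is exactly $\sum_{E\in\mathcal{F}(P)}\langle m,\lambda(E)\rangle x_E$, which is $0$ in $H^\ast(X_P)$ by \eqref{eq_cohom_XP}. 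So $\mathcal{J}_{\text{(2-1)}}$ is in the kernel, and it remains to treat the two kinds of generators of $\mathcal{I}_{\text{(2-1)}}$.

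First, for $x_ix_j$ with $E_i\cap E_j=\emptyset$ and $1\le i,j\le n$, we have $\psi(x_i)\psi(x_j)=\sum_{u,v}x_{u(E_i)}x_{v(E_j)}$, so it suffices to check $u(E_i)\cap v(E_j)=\emptyset$ in $P$; applying $v^{-1}$ this reduces to showing $w(E_i)\cap E_j=\emptyset$ for every $w\in W$. In the polygon $P$ the facet $E_j$ meets only $E_j$ itself and its two neighbours along $\partial P$, and from the description of $\mathcal{F}(P)$ in \eqref{eq_facet_decomp} and Figure~\ref{fig_facets_fund_region_C_2_1} those three facets lie in the $W$-orbits indexed by $j-1,j,j+1$ (with both neighbours of $E_1$ in the orbit of $E_2$ and both neighbours of $E_n$ in that of $E_{n-1}$). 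Since $E_i\cap E_j=\emptyset$ inside $P/W$ forces $i\notin\{j-1,j,j+1\}$, no $W$-translate of $E_i$ meets $E_j$, whence $\psi(x_ix_j)=0$.

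Next, for $x_ix_{s_1}$ with $2\le i\le n$ (and symmetrically $x_ix_{s_2}$ with $1\le i\le n-1$, swapping the roles of $s_1$ and $s_2$), I would use that $\psi(x_{s_1})$ is $W$-invariant, which was verified in the proof of Lemma~\ref{lem_image_invariant_case_2_1}. Since each $x_{u(E_i)}$ is the $W$-translate $u\cdot x_{E_i}$, this gives
\[
\psi(x_i)\psi(x_{s_1})=\sum_{u}x_{u(E_i)}\,\psi(x_{s_1})=\sum_{u}u\bigl(x_{E_i}\,\psi(x_{s_1})\bigr)
\]
in $H^\ast(X_P)$, the sum running over the same index set as in the definition of $\psi(x_i)$; so it is enough to show $x_{E_i}\,\psi(x_{s_1})=0$, and this already holds in $H^\ast_T(X_P)=\QQ[x_E\mid E\in\mathcal{F}(P)]/\mathcal{I}$. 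Indeed, expanding $\psi(x_{s_1})$ in its defining form, the only monomials $x_{E_i}x_{E'}$ that survive in $H^\ast_T(X_P)$ are those with $E'$ equal to $E_i$ or to one of its two $\partial P$-neighbours; for each such $E'$ the coefficient of $x_{E'}$ in $\psi(x_{s_1})$ is $c_{v,k}$ whose minimal-length coset representative $v$ is $id$ or $s_2$ (for $x_{s_2}$ one gets $d_{v,k}$ with $v\in\{id,s_1\}$), hence vanishes by \eqref{eq_coef_c_d_zero}. Thus $x_{E_i}\psi(x_{s_1})=0$ and $\psi(x_ix_{s_1})=0$.

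Once all generators are killed, $\psi$ descends to the stated ring homomorphism. I expect the only delicate point to be the bookkeeping at the two boundary facets $E_1$ and $E_n$ in the last step: there one must pass to the minimal-length representative of the relevant coset of $\langle s_1\rangle$ or $\langle s_2\rangle$ to recognize that the coefficient occurring really is one of the vanishing ones in \eqref{eq_coef_c_d_zero}, while simultaneously keeping track of the degenerate configuration $n=2$ of Remark~\ref{rmk_n_2}, where the middle $W$-orbits are empty and $E_1,E_n$ are adjacent in $P/W$.
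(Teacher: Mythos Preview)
Your proposal is correct and follows essentially the same approach as the paper's proof: you handle $\mathcal{J}_{\text{(2-1)}}$ via \eqref{eq_coeff}, you dispatch $x_ix_j$ by the orbit/adjacency analysis (which the paper merely calls ``a similar computation of \eqref{eq_phi_xixj}''), and for $x_ix_{s_1}$ you use $W$-invariance of $\psi(x_{s_1})$ to reduce to $x_{E_i}\psi(x_{s_1})$ and then invoke \eqref{eq_coef_c_d_zero}, exactly as the paper does in \eqref{eq_well_def_SR_Case_2_1}--\eqref{eq_v_id}. Your extra observation that the vanishing $x_{E_i}\psi(x_{s_1})=0$ already holds in $H^\ast_T(X_P)$ is a harmless strengthening; the boundary bookkeeping you flag in the last paragraph is already correctly handled in your argument (the relevant $v$'s are $id$ or $s_2$, resp.\ $id$ or $s_1$, in every case, including $n=2$).
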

\begin{proof}
It suffices to show that $\ker \psi$ contains both $\mathcal{I}_{\text{(2-1)}}$ and $\mathcal{J}_{\text{(2-1)}}$ of \eqref{eq_cohom_XPG}. For the ideal~$\mathcal{J}_{\text{(2-1)}}$, 
\begin{align*}
&\psi\left(\sum_{j=1}^n\left<m, \lambda(E_j)\right>x_j +\left<m, \eta_1\right>x_{s_1}+\left<m, \eta_2\right>x_{s_2}\right)\\
&= \left<m, \lambda(E_1)\right> \sum_{u\in  { \prescript{s_1}{}{W}}} x_{u(E_1)}+  \sum_{j=2}^{n-1} \left<m, \lambda(E_j)\right>\sum_{u\in W}x_{u(E_j)} + \left<m, \lambda(E_n)\right> \sum_{u\in  { \prescript{s_2}{}{W}} }x_{u(E_n)}\\
&\qquad +\left<m, \eta_1\right>\left( \sum_{u\in  { \prescript{s_1}{}{W}}}  c_{u,1}x_{u(E_1)}+ \sum_{j=2}^{n-1} \sum_{u\in W}c_{u,j}x_{u(E_j)} + \sum_{u\in  { \prescript{s_2}{}{W}} }c_{u,n}x_{u(E_n)} \right) \\
&\qquad +\left<m, \eta_2\right>\left(\sum_{u\in  { \prescript{s_1}{}{W}}}  d_{u,1}x_{u(E_1)}+ \sum_{j=2}^{n-1} \sum_{u\in W}d_{u,j}x_{u(E_j)} + \sum_{u\in  { \prescript{s_2}{}{W}} }d_{u,n}x_{u(E_n)}\right)
\end{align*}
which agrees with \eqref{eq_elements_in_J_Case_2_1} by \eqref{eq_coeff}. 

Next, we consider the ideal $\mathcal{I}_{\text{(2-1)}}$. It is straightforward to see that $\psi(x_ix_j)\in \mathcal{I}$ for~$x_ix_j\in \mathcal{I}_{\text{(2-1)}}$ by a similar computation of \eqref{eq_phi_xixj}. For $x_ix_{s_1}\in \mathcal{J}_{\text{(2-1)}}$ with $2\leq i \leq n$, 
\begin{equation}\label{eq_well_def_SR_Case_2_1}
\psi(x_ix_{s_1})=\left(\sum_{v\in W}x_{v(E_i)}\right)\cdot \psi(x_{s_1})=\sum_{v\in W}\left(x_{v(E_i)}\cdot v\left(\psi(x_{s_1})\right)\right),
\end{equation}
where the second equality follows by Lemma \ref{lem_image_invariant_case_2_1}. 
The right-most term of \eqref{eq_well_def_SR_Case_2_1} for~$v=id$ is 
\begin{equation}\label{eq_v_id}
x_{E_i}\psi(x_{s_1})= x_{E_i}\cdot\left( \sum_{u\in  { \prescript{s_1}{}{W}}}  c_{u,1}x_{u(E_1)}+ \sum_{j=2}^{n-1} \sum_{u\in W}c_{u,j}x_{u(E_j)} + \sum_{u\in { \prescript{s_2}{}{W}} }c_{u,n}x_{u(E_n)}\right).
\end{equation}
For $i=1, \dots, n-1$, we have $c_{id, i-1}=c_{id, i}=c_{id, i+1}=0$ by \eqref{eq_coef_c_d_zero}. For $i=n$, we also have $c_{id,n-1}=c_{id, n}=c_{s_2, n-1}=0$ by \eqref{eq_coef_c_d_zero} again. Therefore, we conclude~\eqref{eq_v_id} is~$0$ in $H^\ast(X_P)$ for all $i=2, \dots n$. The computation for arbitrary $v\in W$ follows by Lemma \ref{lem_image_invariant_case_2_1}. Hence, we have established that \eqref{eq_well_def_SR_Case_2_1} is $0$ in $H^\ast(X_P)$. 
\end{proof}

The rest of the proof is essentially same as the last part of Subsection \ref{subsec_Case_1_1}. To be more precise, both $H^\ast(X_{P/W})$ and $H^\ast(X_P)^W$ are of dimension 1 as $W$-action on $X_P$ is orientation preserving. So, Proposition~\ref{prop_PD} completes the proof because $H^\ast(X_{P/W})$ is a Poincar\'e duality algebra.

\begin{remark}
Toric varieties associated with Type $A_2, B_2, C_2$ and $D_2$ root systems together with the actions of Weyl groups are in the category of Case (2-1). We refer to \cite{HMSS}. In Section \ref{sec_G_2}, we consider the toric variety associated with the root system of type $G_2$. 
\end{remark}

\subsection{Proof of Theorem \ref{thm_main} for Case (2-2)}
The set $\mathcal{F}(P)$ of facets of $P$ with reflections $s_1$ and $s_2$ given in Case (2-2) of Figure \ref{fig_D2k} is 
\[
\mathcal{F}(P)=
\{u(E_1) \mid u\in  { \prescript{s_1}{}{W}}\} \sqcup \left(\bigsqcup_{j=2}^{n-1}\{u(E_j) \mid u\in W\}\right),
\]
see Figure \ref{fig_labeling_Case_2_2_and_2_3}. 
\begin{figure}
\begin{tikzpicture}[, scale=0.8]
\begin{scope}[rotate=7.5]
\foreach \x in {1,2,...,6} {
\coordinate (\x) at (90-15*\x+15: 3); 
}
\draw[fill] (0,0) circle (1pt); 

\draw[fill=yellow, yellow] ($1/2*(1)+1/2*(2)$)--(2)--(3)--(4)--(5)--(6)--(0,0)--cycle;
\foreach \x in {1,2,...,6} {
\draw[fill] (\x) circle (1pt); 
}
\draw[blue] ($0.6*(1)+0.6*(2)$)--($-0.1*(1)-0.1*(2)$);
\draw[blue] ($1.2*(6)$)--($-0.2*(6)$);

\draw ($0.8*(1)+0.2*(105:3)$)--(1)--(2)--($0.8*(2)+0.2*(3)$);
\draw ($0.8*(3)+0.2*(2)$)--(3)--(4)--($0.8*(4)+0.2*(5)$);
\draw ($0.8*(5)+0.2*(4)$)--(5)--(6)--($0.8*(6)+0.2*(0:3)$);

\node[rotate=-15] at (75-7.5:3) {\scriptsize$\cdots$};
\node[rotate=-45] at (45-7.5:3) {\scriptsize$\cdots$};

\node[above] at ($0.5*(1)+0.5*(2)$) {\scriptsize$E_{1}$};
\node[rotate=-30] at ($0.55*(3)+0.55*(4)$) {\scriptsize$E_{j}$};
\node[rotate=-60] at ($0.55*(5)+0.55*(6)$) {\scriptsize$E_{n-1}$};

\node[left] at ($0.25*(1)+0.25*(2)$) {\scriptsize$E_{s_1}$};
\node[below right] at ($0.5*(6)$) {\scriptsize$E_{s_2}$};
\end{scope}

\begin{scope}[xshift=200]
\foreach \x in {1,2,...,6} {
\coordinate (\x) at (90-15*\x+15: 3); 
}
\draw[fill] (0,0) circle (1pt); 

\draw[fill=yellow, yellow] (1)--(2)--(3)--(4)--(0,0)--cycle;

\foreach \x in {1,2,...,4} {
\draw[fill] (\x) circle (1pt); 
}

\draw[blue] ($1.2*(1)$)--($-0.2*(1)$);
\draw[blue] ($1.2*(4)$)--($-0.2*(4)$);

\draw ($0.8*(1)+0.2*(105:3)$)--(1)--(2)--($0.8*(2)+0.2*(3)$);
\draw ($0.8*(3)+0.2*(2)$)--(3)--(4)--($0.8*(4)+0.2*(5)$);

\node[rotate=-15] at (75-7.5:3) {\scriptsize$\cdots$};

\node[above] at ($0.5*(1)+0.5*(2)$) {\scriptsize$E_{2}$};
\node[rotate=-30] at ($0.55*(3)+0.55*(4)$) {\scriptsize$E_{n-1}$};

\node[left] at ($0.5*(1)$) {\scriptsize$E_{s_1}$};
\node[below right] at ($0.5*(4)$) {\scriptsize$E_{s_2}$};
\end{scope}
\end{tikzpicture}
\caption{Labelings of facets of $P/W$ for Cases (2-2) and (2-3). }
\label{fig_labeling_Case_2_2_and_2_3}
\end{figure}
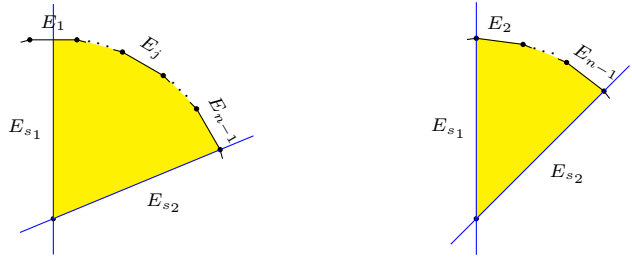
Accordingly, \eqref{eq_cohom_XP} gives us the cohomology of $X_{P/W}$ as follows:
\begin{equation*}\label{eq_cohom_Case_2_2}
H^\ast(X_{P/W})=
\QQ[x_1, \dots, x_{n-1}, x_{s_1}, x_{s_2}]/\mathcal{I}_{\text{(2-2)}}+\mathcal{J}_{\text{(2-2)}},
\end{equation*}
where 
$\mathcal{I}_{\text{(2-2)}}$ is the ideal generated by 
\[
\{ x_ix_j \mid E_i\cap E_j =\emptyset,~ 1\leq i, j \leq n-1\} \cup \{x_ix_{s_1} \mid 2\leq i \leq n-1\} \cup  \{x_ix_{s_2} \mid 1\leq i \leq n-2\} 
\]
and $\mathcal{J}_{\text{(2-2)}}$ is the ideal generated by 
\[
\left\{
\sum_{j=1}^{n-1}\left<m, \lambda(E_j)\right>x_j +\sum_{j=1}^2 \left<m, \eta_j\right>x_{s_j} ~\Big|~ m\in M
\right\}.
\]
When $n=3$ in particular, the first part $\{ x_ix_j \mid E_i\cap E_j =\emptyset,~ 1\leq i, j \leq n-1\}$ of generators of $\mathcal{I}_{\text{(2-2)}}$ is empty. 

Now, it is straightforward to follow the proofs of  Lemmas \ref{lem_image_invariant_case_2_1} and \ref{lem_well_def_case_2_1} together with the map $\psi \colon H^\ast(X_{P/W}) \to H^\ast(X_P)$ defined in \eqref{eq_psi} with omitting  $\psi(x_n)$. Also, the rest of the proof is the same as the one given in the previous subsection.

%

\subsection{Proof of Theorem \ref{thm_main} for Case (2-3)}
In this case, facets of $P$ is 
\[
\mathcal{F}(P)=
\bigsqcup_{j=2}^{n-1}\{u(E_j) \mid u\in W\}. 
\]
In particular when $n=3$, $\mathcal{F}(P)=\{ u(E_2)\mid u\in W\}$. 
Therefore, the cohomology ring of~$X_{P/W}$ is 
\begin{equation*}\label{eq_cohom_Case_2_3}
H^\ast(X_{P/W})=
\QQ[x_2, \dots, x_{n-1}, x_{s_1}, x_{s_2}]/\mathcal{I}_{\text{(2-3)}}+\mathcal{J}_{\text{(2-3)}},
\end{equation*}
where 
$\mathcal{I}_{\text{(2-3)}}$ is the ideal generated by 
\[
\begin{cases}
\begin{array}{l}
 \{ x_ix_j \mid E_i\cap E_j =\emptyset,~ 2\leq i, j \leq n-1\} \\
 \quad \cup \{x_ix_{s_1} \mid 3\leq i \leq n-1\} \cup  \{x_ix_{s_2} \mid 2\leq i \leq n-2\} 
\end{array}
& \text{ if } n\geq 5;\\
~\{x_ix_{s_1} \mid 3\leq i \leq n-1\} \cup  \{x_ix_{s_2} \mid 2\leq i \leq n-2\}  
& \text{ if } n=4;\\
~~x_2x_{s_1}x_{s_2} &\text{ if } n=3,
\end{cases}
\]
and $\mathcal{J}_{\text{(2-3)}}$ is the ideal generated by 
\[
\left\{
\sum_{j=2}^{n-1}\left<m, \lambda(E_j)\right>x_j +\sum_{j=1}^2 \left<m, \eta_j\right>x_{s_j} ~\Big|~ m\in M
\right\}.
\]
Lemma \ref{lem_image_invariant_case_2_1} for this case follows by a similar computation with minor modification. For the proof of Lemma \ref{lem_well_def_case_2_1}, it is necessary to consider the case where $n=3$, which realizes $P/W$ as a triangle. In this case, 
\begin{align*}
\psi(x_2x_{s_1}x_{s_2})=\left(\sum_{u\in W}x_{u(E_2)}\right)\cdot \psi(x_{s_1}) \cdot \psi(x_{s_2})=\sum_{u\in W}\left( x_{u(E_2)} \cdot \psi(x_{s_1}) \cdot \psi(x_{s_2}) \right).
\end{align*}
For $id\in W$, 
\begin{align}\label{eq_SR_u_is_id}
\begin{split}
x_{E_2} \cdot \psi(x_{s_1}) \cdot \psi(x_{s_2})&= x_{E_2}\cdot \left( \sum_{u\in W}c_{u,2}x_{u(E_2)}\right) \cdot  \left(\sum_{u\in W}d_{u,2}x_{u(E_2)}\right)
 \\
&=\left(x_{E_2}\cdot c_{s_1,2}x_{s_1(E_2)}\right)\cdot \left(\sum_{u\in W}d_{u,2}x_{u(E_2)}\right)=0
\end{split}
\end{align}
where the second equality follows because $c_{id,2}=c_{s_2,2}=0$ (see \eqref{eq_coef_c_d_zero}) and $E_2$ intersects $s_i(E_2)$ for $i=1, 2$ only. The last equality follows because $d_{id,2}=d_{s_1, 2}=0$ and $s_1(E_2)$ does not intersect $s_2(E_2)$. 

Similarly for general $u\in W$, using the result of Lemma \ref{lem_image_invariant_case_2_1}, one can show that 
\[
 x_{u(E_2)} \cdot \psi(x_{s_1}) \cdot \psi(x_{s_2})=
x_{u(E_2)} \cdot u\left(\psi(x_{s_1})\right) \cdot u\left(\psi(x_{s_2})\right)=0
\]
where the first equality follows using the result of Lemma \ref{lem_image_invariant_case_2_1} and the second equality follows by the same reason for \eqref{eq_SR_u_is_id}.

\section{Example: toric variety associated with $G_2$-root system}\label{sec_G_2}
Let $V$ be the subspace of $\RR^3$ orthogonal to $(1,1,1)\in \RR$ and $\Phi_{G_2}$ the root system of type $G_2$ in $V$. We take simple roots $\{\alpha_1\colonequals (1,-1,0), \alpha_2\colonequals (-1,2,-1)\}$ and fundamental  coweights $\{\omega_1, \omega_2\}\subset V^\ast$, namely the dual basis of $\{\alpha_1, \alpha_2\}\subset V$. Following the notations of Section \ref{sec_toric_surface}, we take $M$ and $N$ to be the root lattice and coweight lattice, respectively. 
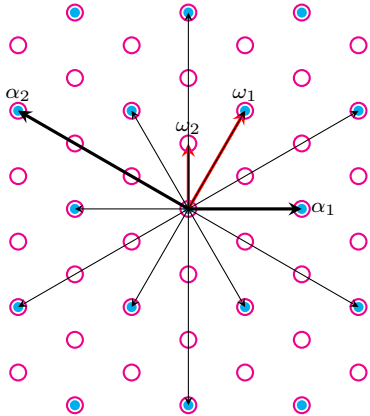
\begin{figure}
\begin{tikzpicture}[scale=1]

\node[opacity=0, regular polygon, regular polygon sides=6, draw, minimum size = 3cm](m) at (0,0) {};
\foreach \x in {1,...,6}{
\coordinate (\x) at (m.corner \x); 
}

\foreach \x in {0,1,2,3}{
\draw[cyan, thick, fill] ($(2)+(3)+\x*(6)$) circle (1.5pt); 
\draw[cyan, thick,fill] ($(3)+(4)+\x*(6)$) circle (1.5pt); 
}

\foreach \x in {-1,0,1}{
\draw[cyan, thick,fill] ($(0,0)+\x*(6)$) circle (1.5pt); 
\draw[cyan, thick,fill] ($(1)+(2)+\x*(6)$) circle (1.5pt); 
\draw[cyan, thick,fill] ($(4)+(5)+\x*(6)$) circle (1.5pt); 
}

\node[right] at (6) {\footnotesize$\alpha_1$};
\node[above] at ($(2)+(3)$) {\footnotesize$\alpha_2$};

\coordinate (w1) at ($2*(6)+(2)+(3)$);
\coordinate (w2) at ($(6)+2/3*(2)+2/3*(3)$);

\draw[red, very thick, -stealth] (0,0)--(w1); 
\draw[red, very thick, -stealth] (0,0)--(w2); 

\node[above] at (w1) {\footnotesize$\omega_1$};
\node[above] at (w2) {\footnotesize$\omega_2$};

\foreach \y in {-3,-2,-1,0,1,2,3}{
\draw[magenta, thick] ($\y*(w2)$) circle (3pt); 
\draw[magenta, thick] ($2*(w1)-3*(w2)+\y*(w2)$) circle (3pt); 
\draw[magenta, thick] ($-2*(w1)+3*(w2)+\y*(w2)$) circle (3pt); 
}

\foreach \y in {1,0,-1,-2,-3,-4}{
\draw[magenta, thick] ($-1*(w1)+3*(w2)+\y*(w2)$) circle (3pt); 
\draw[magenta, thick] ($1*(w1) + \y*(w2)$) circle (3pt); 
\draw[magenta, thick] ($3*(w1) + \y*(w2) -3*(w2)$) circle (3pt); 
\draw[magenta, thick] ($-3*(w1) + \y*(w2) +6*(w2)$) circle (3pt); 
}

\draw[very thick, -stealth] (0,0)--(6);
\draw[very thick, -stealth] (0,0)--($(2)+(3)$);
\draw[stealth-stealth] (1)--(4);
\draw[stealth-stealth] (2)--(5);
\draw[stealth-] (3)--(0,0);

\draw[stealth-stealth] ($(1)+(2)$)--($(4)+(5)$);
\draw[stealth-stealth] ($(2)+(3)$)--($(5)+(6)$);
\draw[stealth-stealth] ($(3)+(4)$)--($(6)+(1)$);

\end{tikzpicture}
\caption{The root lattice \textcolor{cyan}{$\bullet$} and coweight lattice \textcolor{magenta}{$\pmb\ovoid$} of type $G_2$.}
\label{fig_G_2_root_system}
\end{figure}
See Figure \ref{fig_G_2_root_system} for the description of $M$ and $N$, where we draw two lattices in the same plane using the coordinate presentations with respect to the standard basis of $\RR^3$ and its dual basis.

The Weyl group $W\colonequals W_{G_2}$ of type $G_2$ is generated by reflections $s_1$ and $s_2$ with respect to hyperplanes determined by $\alpha_1$ and $\alpha_2$ respectively. In this case, $W$ is isomorphic to the Dihedral group $D_{12}$. Now we consider the polytope 
\begin{align}\label{eq_fund_reg_G_2}
P \colonequals 
\bigcap_{u\in { \prescript{s_1}{}{W}}} \left\{ \mathbf{x}\in  V  \mid \left< \mathbf{x}, u\omega_2\right>+ a_u \leq 0\right\} \cap \bigcap_{v\in { \prescript{s_2}{}{W}}} \left\{ \mathbf{x}\in  V  \mid \left< \mathbf{x}, v\omega_1\right>+ b_v \leq 0\right\}
\end{align}
for some $a_u, b_v<0$, which can be thought as a \emph{weight polytope} of type $G_2$. See for instance \cite[Section 4]{Pos} for a general definition of a weight polytope. 

The fundamental coweight $\omega_1$ determines the edge $E_2$ and $\omega_2$ determines $E_1$. Accordingly, the edges~$u(E_1)$ and $u(E_2)$ are determined by $u\omega_2$ and $u\omega_1$, respectively, for $u\in W$. To be more precise, the dual representation of the Weyl group $W$ on the coweight space induced from the usual $W$-action on the root space is given by 
\begin{equation}\label{eq_W_rep_coweight_sp}
s_1 \colon \begin{array}{l} \omega_1 \mapsto -\omega_1+3\omega_2 \\ \omega_2 \mapsto \omega_2\end{array} \quad \text{ and } \quad 
s_2 \colon \begin{array}{l} \omega_1 \mapsto \omega_1 \\ \omega_2 \mapsto \omega_1-\omega_2.\end{array}
\end{equation}
Following \eqref{eq_fund_reg_G_2} together with \eqref{eq_W_rep_coweight_sp}, we illustrate complete information of vectors defining facets in Figure~ \ref{fig_G_2_rootsystem_weight_poly}.

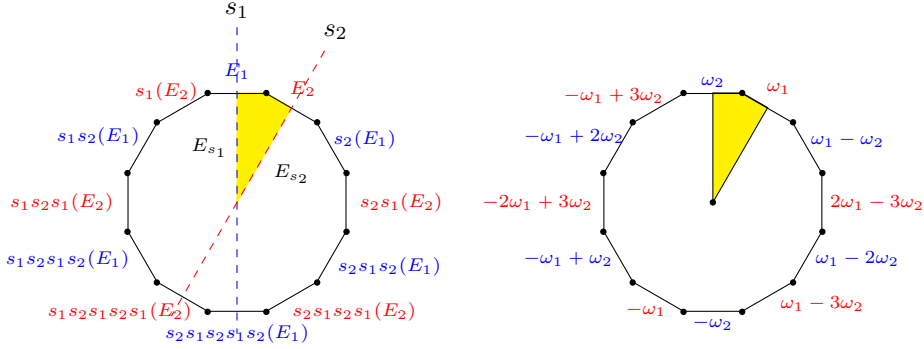
\begin{figure}
\begin{tikzpicture}

\node[opacity=0, regular polygon, regular polygon sides=12, draw, minimum size = 3cm](m) at (0,0) {};
\foreach \x in {1,...,12}{
\coordinate (\x) at (m.corner \x); 
}

\draw[fill=yellow, yellow] (0,0)--($0.5*(1)+0.5*(2)$)--(1)--($0.5*(1)+0.5*(12)$)--cycle;

\node[regular polygon, regular polygon sides=12, draw, minimum size = 3cm](m) at (0,0) {};

\foreach \x in {1,...,12}{
\coordinate (\x) at (m.corner \x); 
\draw[fill] (\x) circle (1pt); 
}

\draw[blue, dashed] ($0.8*(1)+0.8*(2)$)--($-0.6*(1)-0.6*(2)$);
\draw[red, dashed] ($0.8*(1)+0.8*(12)$)--($-0.6*(1)-0.6*(12)$);

\node[above] at ($0.8*(1)+0.8*(2)$) {$s_1$};
\node at ($0.9*(1)+0.9*(12)$) {$s_2$};

\node[red] at ($0.6*(1)+0.6*(12)$) {\scriptsize$E_2$};
\node[red]  at ($0.5*(2)+0.7*(3)$) {\scriptsize$s_1(E_2)$};
\node[red]  at ($0.75*(10)+0.75*(11)$) {\scriptsize$s_2s_1(E_2)$};
\node[red]  at ($0.8*(4)+0.8*(5)$) {\scriptsize$s_1s_2s_1(E_2)$};
\node[red]  at ($-0.1*(8)+1.5*(9)$) {\scriptsize$s_2s_1s_2s_1(E_2)$};
\node[red]  at ($1.5*(6)-0.1*(7)$) {\scriptsize$s_1s_2s_1s_2s_1(E_2)$};

\node[blue]  at ($0.6*(1)+0.6*(2)$) {\scriptsize$E_1$};
\node[blue]  at ($0.8*(11)+0.5*(12)$) {\scriptsize$s_2(E_1)$};
\node[blue]  at ($0.5*(3)+0.9*(4)$) {\scriptsize$s_1s_2(E_1)$};
\node[blue]  at ($0.4*(9)+1.1*(10)$) {\scriptsize$s_2s_1s_2(E_1)$};
\node[blue]  at ($1.4*(5)+0.2*(6)$) {\scriptsize$s_1s_2s_1s_2(E_1)$};
\node[blue]  at ($0.6*(7)+0.6*(8)$) {\scriptsize$s_2s_1s_2s_1s_2(E_1)$};

\node[left] at ($1/4*(1)+1/4*(2)$) {\scriptsize$E_{s_1}$};
\node[below right] at ($1/4*(1)+1/4*(12)$) {\scriptsize$E_{s_2}$};

\begin{scope}[xshift=180]

\node[opacity=0, regular polygon, regular polygon sides=12, draw, minimum size = 3cm](m) at (0,0) {};
\foreach \x in {1,...,12}{
\coordinate (\x) at (m.corner \x); 
}

\draw[fill=yellow] (0,0)--($0.5*(1)+0.5*(2)$)--(1)--($0.5*(1)+0.5*(12)$)--cycle;

\node[regular polygon, regular polygon sides=12, draw, minimum size = 3cm](m) at (0,0) {};

\foreach \x in {1,...,12}{
\coordinate (\x) at (m.corner \x); 
\draw[fill] (\x) circle (1pt); 
}
\draw[fill] (0,0) circle (1pt); 

\node[red] at ($0.6*(1)+0.6*(12)$) {\scriptsize$\omega_1$};
\node[red]  at ($0.1*(2)+1.2*(3)$) {\scriptsize$-\omega_1+3\omega_2$};
\node[red]  at ($0.75*(10)+0.75*(11)$) {\scriptsize$2\omega_1-3\omega_2$};
\node[red]  at ($0.8*(4)+0.8*(5)$) {\scriptsize$-2\omega_1+3\omega_2$};
\node[red]  at ($-0.1*(8)+1.4*(9)$) {\scriptsize$\omega_1-3\omega_2$};
\node[red]  at ($0.65*(6)+0.5*(7)$) {\scriptsize$-\omega_1$};

\node[blue]  at ($0.55*(1)+0.55*(2)$) {\scriptsize$\omega_2$};
\node[blue]  at ($0.9*(11)+0.45*(12)$) {\scriptsize$\omega_1-\omega_2$};
\node[blue]  at ($0.5*(3)+0.9*(4)$) {\scriptsize$-\omega_1+2\omega_2$};
\node[blue]  at ($0.3*(9)+1.1*(10)$) {\scriptsize$\omega_1-2\omega_2$};
\node[blue]  at ($1.1*(5)+0.3*(6)$) {\scriptsize$-\omega_1+\omega_2$};
\node[blue]  at ($0.55*(7)+0.55*(8)$) {\scriptsize$-\omega_2$};
\end{scope}
\end{tikzpicture}
\caption{(LHS) The weight polytope of $G_2$ root system and the fundamental region for $W$-action; (RHS) Vectors defining facets.}
\label{fig_G_2_rootsystem_weight_poly}
\end{figure}

The the fundamental region $P/W$ is colored in Figure \ref{fig_G_2_rootsystem_weight_poly} which is in the category of Case (2-1) of Section \ref{sec_two_ref}. We refer to Remark \ref{rmk_n_2}. Two edges $E_{s_1}$ and $E_{s_2}$  are determined by (minus of ) coroots 
\[
\{-\alpha_1^\vee=-2\omega_1+3\omega_2, -\alpha_2^\vee=\omega_1-2\omega_2\} \subset V^\ast
\] 
and they will play roles of $\eta_1, \eta_2$ in Section~\ref{sec_two_ref}. 
\begin{remark}
Blume \cite{Blu} introduced toric orbifolds associated to Cartan matrices for classical types. The toric variety $X_{P/W}$ corresponding to the fundamental region $P/W$ of \eqref{eq_fund_reg_G_2} can also be understood as a toric orbifold associated to the Cartan matrix of type $G_2$.
\end{remark}

Below, we calculate integers $\{c_{u,1}, c_{u,2}, d_{u,1}, d_{u,2}\mid u\in W\}$ of \eqref{eq_coeff}. Recall from~\eqref{eq_coef_c_d_zero} that  
\[
c_{id,1}=c_{id,2}=c_{s_2,1}=d_{id,1}=d_{id,2}=d_{s_1,2}=0.
\]  
We calculate the rest of integers $c_{u, i}$ and $d_{u, i}$ for $i=1, 2$:
\[
\begin{array}{rclcl}
\lambda(s_2(E_1))-\lambda(E_1)&=&\omega_1-2\omega_2&=&-\alpha_2^\vee,\\
\lambda(s_1s_2(E_1))-\lambda(E_1)&=&-\omega_1+\omega_2&=&-\alpha_1^\vee-\alpha_2^\vee,\\
\lambda(s_2s_1s_2(E_1))-\lambda(E_1)&=& \omega_1-3\omega_2&=&-\alpha_1^\vee-3\alpha_2^\vee,\\
\lambda(s_1s_2s_1s_2(E_1))-\lambda(E_1)&=& -\omega_1&=&-2\alpha_1^\vee-3\alpha_2^\vee,\\
\lambda(s_2s_1s_2s_1s_2(E_1))-\lambda(E_1)&=& -2\omega_2&=&-2\alpha_1^\vee-4\alpha_2^\vee,\\
\lambda(s_1(E_2))-\lambda(E_2)&=& -2\omega_1+3\omega_2&=&-\alpha_1^\vee,\\
\lambda(s_2s_1(E_2))-\lambda(E_2)&=& \omega_1-3 \omega_2&=&-\alpha_1^\vee-3\alpha_2^\vee,\\
\lambda(s_1s_2s_1(E_2))-\lambda(E_2)&=& -3\omega_1+3 \omega_2&=&-3\alpha_1^\vee-3\alpha_2^\vee,\\
\lambda(s_2s_1s_2s_1(E_2))-\lambda(E_2)&=& -3 \omega_2&=&-3\alpha_1^\vee-6\alpha_2^\vee,\\
\lambda(s_1s_2s_1s_2s_1(E_2))-\lambda(E_2)&=& -2 \omega_2&=&-4\alpha_1^\vee-6\alpha_2^\vee.
\end{array}
\]
The result of the above computation is summarized in Table \ref{Tab_coef}, and it gives 
the map $\psi\colon H^\ast(X_{P/W}) \to H^\ast(X_P)$ of \eqref{eq_psi} is defined by 
\begin{align*}
x_{E_1} &\mapsto x_{E_1}+ x_{s_2(E_1)}+x_{s_1s_2(E_1)}+x_{s_2s_1s_2(E_1)}+x_{s_1s_2s_1s_2(E_1)}+x_{s_2s_1s_2s_1s_2(E_1)};\\
x_{E_2} &\mapsto x_{E_1}+ x_{s_1(E_2)}+x_{s_2s_1(E_2)}+x_{s_1s_2s_1(E_2)}+x_{s_2s_1s_2s_1(E_2)}+x_{s_1s_2s_1s_2s_1(E_1)};\\
x_{E_{s_1}}&\mapsto \sum_{u\in  { \prescript{s_1}{}{W}}} c_{u,1}x_{u(E_1)} + \sum_{u\in  { \prescript{s_2}{}{W}}} c_{u,2}x_{u(E_2)};\\
x_{E_{s_2}}&\mapsto \sum_{u\in  { \prescript{s_1}{}{W}}} d_{u,1}x_{u(E_1)} + \sum_{u\in  { \prescript{s_2}{}{W}}} d_{u,2}x_{u(E_2)}.
\end{align*}
Now, one can show that $\psi$ induces an isomorphism $H^\ast(X_{P/W})\to H^\ast(X_P)^W$. 

\begin{table}[]
\def\arraystretch{1.2}
\begin{tabular}{C|CCCCCC}\hline 
u\in  { \prescript{s_1}{}{W}} & id & s_2 & s_1s_2 & s_2s_1s_2 & s_1s_2s_1s_2 & s_2s_1s_2s_1s_2 \\ \hline 
c_{u,1}        & 0  & 0   & 1      & 1         & 2            & 2               \\ \hline
d_{u,1}        & 0  & 1   & 1      & 3         & 3            & 4               \\ \hline \hline
u\in  { \prescript{s_2}{}{W}} & id & s_1 & s_2s_1 & s_1s_2s_1 & s_2s_1s_2s_1 & s_1s_2s_1s_2s_1 \\ \hline 
c_{u,2}        & 0  & 1   & 1      & 3         & 3            & 4               \\ \hline 
d_{u,2}        & 0  & 0   & 3      & 3         & 6            & 6              \\ \hline
\end{tabular}
\caption{List of coefficients for $\psi(x_{E_{s_1}})$ and $\psi(x_{E_{s_2}})$.}
\label{Tab_coef}
\end{table}

%
%

\end{document}